\documentclass[11pt]{amsart}

\usepackage{amsmath,amssymb,amsthm,mathtools}
\usepackage{microtype}
\usepackage{enumitem}
\usepackage{aliascnt}
\usepackage[hidelinks]{hyperref}

\newcommand{\CC}{\mathbb{C}}
\newcommand{\NN}{\mathbb{N}}
\newcommand{\RR}{\mathbb{R}}
\newcommand{\dd}{\mathop{}\!\mathrm{d}}
\newcommand{\Gf}{\mathcal{G}}
\newcommand{\Hf}{\mathcal{H}}
\newcommand{\XiG}{\Xi_{\Gf}}
\newcommand{\Log}{\operatorname{Log}}
\newcommand{\Tr}{\operatorname{Tr}}
\newcommand{\im}{\operatorname{Im}}
\newcommand{\re}{\operatorname{Re}}
\newcommand{\Eone}{E_1}
\newcommand{\abs}[1]{\left\lvert#1\right\rvert}
\newcommand{\fallfac}[2]{(#1)^{\underline{#2}}}
\newcommand{\risefac}[2]{(#1)^{\overline{#2}}}
\newcommand{\Tperp}{T_{\perp}}
\newcommand{\Dcomp}{\mathfrak{D}}

\newcommand{\defi}[1]{\textbf{#1}}

\theoremstyle{plain}
\newtheorem{theorem}{Theorem}[section]

\newaliascnt{proposition}{theorem}
\newtheorem{proposition}[proposition]{Proposition}
\aliascntresetthe{proposition}

\newaliascnt{lemma}{theorem}
\newtheorem{lemma}[lemma]{Lemma}
\aliascntresetthe{lemma}

\newaliascnt{corollary}{theorem}
\newtheorem{corollary}[corollary]{Corollary}
\aliascntresetthe{corollary}

\newaliascnt{conjecture}{theorem}
\newtheorem{conjecture}[conjecture]{Conjecture}
\aliascntresetthe{conjecture}

\theoremstyle{definition}
\newaliascnt{definition}{theorem}
\newtheorem{definition}[definition]{Definition}
\aliascntresetthe{definition}
\newtheorem*{definition*}{Definition}

\theoremstyle{remark}
\newaliascnt{remark}{theorem}

\aliascntresetthe{remark}

\usepackage[nameinlink,noabbrev]{cleveref}

\crefname{theorem}{theorem}{theorems}
\crefname{proposition}{proposition}{propositions}
\crefname{lemma}{lemma}{lemmas}
\crefname{corollary}{corollary}{corollaries}
\crefname{conjecture}{conjecture}{conjectures}
\crefname{definition}{definition}{definitions}
\crefname{remark}{remark}{remarks}
\Crefname{theorem}{Theorem}{Theorems}
\Crefname{proposition}{Proposition}{Propositions}
\Crefname{lemma}{Lemma}{Lemmas}
\Crefname{corollary}{Corollary}{Corollaries}
\Crefname{conjecture}{Conjecture}{Conjectures}
\Crefname{definition}{Definition}{Definitions}
\Crefname{remark}{Remark}{Remarks}

\title{The Gregory function and its completed Gregory transform}
\author{Grant Molnar}
\date{}

\subjclass[2020]{Primary 44A15, 33E20; Secondary 30D10, 30D15, 47A10.}
\keywords{Gregory coefficients; Bernoulli numbers of the second kind; Markov transform; Cauchy--Stieltjes transform; real zeros; entire functions; Fredholm determinant}

\begin{document}

\begin{abstract}
We study the entire interpolation
\[
    \mathcal{G}(z)=\int_0^1 \binom{x}{z}\,dx
\]
of the Gregory coefficients. Its completion satisfies the positive Markov-transform identity
\[
    \frac{\pi z}{\sin(\pi z)}\mathcal{G}(z)
    =\sum_{n=1}^{\infty}\frac{n\left|G_n\right|}{n-z}.
\]
Consequently, every zero is real and simple; the negative zeros are the integers $-1,-2,\ldots$, and one zero $\rho_n$ lies in each $(n,n+1)$. We derive complete logarithmic asymptotics for $\rho_n-n$, determine the Cartwright growth and canonical products of $\mathcal{G}$, and realize $1/\rho_n$ spectrally. The resulting relative determinant yields
\[
    \gamma=\sum_{n=1}^{\infty}\left(\frac{1}{n}-\frac{1}{\rho_n}\right).
\]
\end{abstract}

\maketitle
\enlargethispage{2pt}

\section{Introduction}

The coefficients $(G_n)_{n\geq0}$ of 
\[
    \frac{z}{\log(1+z)}
    =
    \sum_{n=0}^{\infty}G_nz^n
    \quad\text{for}\quad \abs{z} < 1,
\]
go by many names, such as the \defi{Gregory coefficients}, \defi{reciprocal logarithmic numbers}, \defi{Bernoulli numbers
of the second kind}, and \defi{normalized Cauchy numbers of the first kind} \cite[p.~2]{Blagouchine2017}. 

In this paper, we define a function $\Gf (z)$ that interpolates the Gregory coefficients, and study its properties. Interpolation of a sequence is of course nonunique. However, the identity
\begin{equation}
    G_n=\int_0^1\binom{x}{n}\dd x.\label{eqn-Gn}
\end{equation}
suggests a distinguished construction: we retain the integral and allow the lower binomial argument to vary.

\begin{definition}\label{def:gregory-function}
The \defi{Gregory function} is the entire function
\begin{equation}\label{eqn:gregory-function}
    \Gf(z)
    \coloneqq
    \int_0^1\binom{x}{z}\dd x
    =
    \frac{1}{\Gamma(z+1)}
    \int_0^1
    \frac{\Gamma(x+1)}
         {\Gamma(x-z+1)}
    \dd x
\end{equation}
\end{definition}

Since the reciprocal gamma function is entire, the integral in \eqref{eqn:gregory-function} defines an entire function of $z$.

\begin{theorem}\label{thm:main}
Let $\Gf$ be as in \Cref{def:gregory-function}. Then the following statements hold.
\begin{enumerate}[label=\textnormal{(\arabic*)}]
    \item The function $\Gf$ is a real entire function of order $1$,
    has exponential type $\pi$, and belongs to the Cartwright class.
    Its
    Phragm\'en--Lindel\"of indicator is
    \[
        h_{\Gf}(\theta)=\pi\abs{\sin\theta}.
    \]

    \item Every zero of $\Gf$ is real and simple. Its zero set is
    \[
        \{-1,-2,-3,\ldots\}
        \cup
        \{\rho_n:n\geq1\},
    \]
    where
    \[
        n<\rho_n<n+1.
    \]

    \item We have
    \[
        \frac32<\rho_1<2
        \quad\text{and}\quad n<\rho_n<n+\frac12 \quad\text{for}\quad n \geq 2.
    \]
    If $\varepsilon_n\coloneqq \rho_n-n$, then
    \[
        \varepsilon_1>\varepsilon_2>\varepsilon_3>\cdots>0.
    \]

    \item The genus-one Hadamard product is
    \[
        \Gf(z)
        =
        e^{\gamma z}
        \prod_{\rho\in Z(\Gf)}
        \left(1-\frac{z}{\rho}\right)e^{z/\rho}.
    \]
    The paired product
    \[
        \Gf(z)
        =
        \prod_{n=1}^{\infty}
        \left(1+\frac{z}{n}\right)
        \left(1-\frac{z}{\rho_n}\right)
    \]
    also converges locally uniformly. In particular, $\Gf$ belongs
    to the Laguerre--P\'olya class.

    \item Euler's constant satisfies
    \[
        \gamma
        =
        \sum_{n=1}^{\infty}
        \left(\frac1n-\frac1{\rho_n}\right).
    \]

    \item As $n\to\infty$, writing $L\coloneqq\log n$, we have
    \[
    \begin{aligned}
        \rho_n-n
        ={}&
        \frac1L
        -\frac{\gamma}{L^2}
        +\frac{\gamma^2}{L^3}
        -\frac{\gamma^3+6\zeta(3)}{L^4} \\
        &+
        \frac{\gamma^4+24\gamma\zeta(3)+7\pi^4/90}{L^5}
        +O(L^{-6}).
    \end{aligned}
    \]
    More generally, $\rho_n-n$ admits a complete asymptotic expansion
    in inverse powers of $\log n$.
\end{enumerate}
\end{theorem}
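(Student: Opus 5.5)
The plan is to establish the Markov-transform identity from the abstract first, and then derive essentially everything else from it. Put $F(z)\coloneqq \frac{\pi z}{\sin(\pi z)}\Gf(z)$. Since $\binom{x}{z}=\Gamma(x+1)/(\Gamma(z+1)\Gamma(x-z+1))$ and $1/\Gamma(z+1)$ vanishes at $z=-1,-2,\ldots$, the function $F$ is meromorphic with poles only at the positive integers. At $z=n$ its residue is $(-1)^n n\int_0^1\binom{x}{n}\dd x=(-1)^n nG_n=-n\abs{G_n}$, using \eqref{eqn-Gn} and the known sign pattern $\operatorname{sgn}G_n=(-1)^{n-1}$. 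To exclude an entire remainder in the Mittag-Leffler expansion, I will bound $\binom{x}{z}$ uniformly for $x\in[0,1]$ on the circles $\abs{z}=N+\tfrac12$. The reflection formula gives $\binom{x}{z}=\frac{\sin(\pi(z-x))}{\pi}\frac{\Gamma(x+1)\Gamma(z-x)}{\Gamma(z+1)}$, and the ratio $\Gamma(z-x)/\Gamma(z+1)=O(\abs{z}^{-1-x})$ away from the poles. This shows that $F(z)/z\to0$ on these contours, so $F(z)=c+\sum_{n\ge1} n\abs{G_n}/(n-z)$. Evaluating at $z=0$, where $\Gf(0)=1$ and $\sum_{n\ge1}\abs{G_n}=G_0=1$ (because $\sum_n G_n=1/\log 2\cdot\ldots$, or equivalently by Abel summation at $z=-1$), forces $c=0$. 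Convergence of the sum follows from $\abs{G_n}\sim 1/(n\log^2 n)$.

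Items (2), (4) and (5) then follow. For $\im z\neq0$ we have $\im F(z)=\im z\sum n\abs{G_n}/\abs{n-z}^2\neq0$, so all zeros are real. On $\RR$ the derivative $F'=\sum n\abs{G_n}/(n-z)^2$ is positive, so $F>0$ on $(-\infty,1)$, and on each interval $(n,n+1)$ the function $F$ increases from $-\infty$ to $+\infty$. Hence $F$ has exactly one zero $\rho_n$ in each such interval, and it is simple. Passing back to $\Gf=\frac{\sin\pi z}{\pi z}F$, the zeros of the sine at the negative integers survive, those at the positive integers cancel the poles (so $\Gf(n)=G_n\neq0$), and all zeros are simple. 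For (4) I use Hadamard with order $1$, taken from (1). The constant term is $\log\Gf(0)=0$. For the linear term, $\Gf'(0)=\int_0^1(\psi(x+1)+\gamma)\dd x=\gamma+\log\Gamma(2)-\log\Gamma(1)=\gamma$. The paired product converges because $\frac1n-\frac1{\rho_n}=O(\varepsilon_n/n^2)$. Comparing it with the genus-one product, the paired product equals $e^{-z\sum(1/n-1/\rho_n)}$ times the canonical product, and matching the linear coefficient yields (5). Membership in Laguerre--P\'olya is then immediate.

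For (1), the reality of $\Gf$ is clear. To get the indicator I use $\Gf=\frac{\sin\pi z}{\pi z}F$ and show that $\log\abs{F(re^{i\theta})}=o(r)$ for $\theta\neq0,\pi$. The upper bound follows from $\abs{F}\le \sum a_n/\abs{n-z}$. The lower bound follows from $\abs{\im F(z)}\ge \abs{\im z}\, a_1/\abs{1-z}^2$, which decays only polynomially. This gives $h(\theta)=\pi\abs{\sin\theta}$ off the real axis, and by continuity of indicators it extends everywhere. Since $\abs{\Gf(x)}\le \int_0^1\abs{\binom{x'}{x}}\dd x'$ grows at most polynomially on $\RR$, the Cartwright integral is finite.

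The main obstacle is the quantitative part, items (3) and (6). Write $\rho_n=n+\varepsilon$ and split $F(n+\varepsilon)=-n\abs{G_n}/\varepsilon+R_n(\varepsilon)$ with $R_n(\varepsilon)=\sum_{m\neq n}m\abs{G_m}/(m-n-\varepsilon)$. Then $\varepsilon_n=n\abs{G_n}/R_n(\varepsilon_n)$, and the task reduces to asymptotics of $n\abs{G_n}$ and of $R_n$ in powers of $1/L$. For these I would use the Schröder-type representation $\abs{G_n}=\int_0^\infty \frac{\dd t}{(1+t)^n(\pi^2+\log^2 t)}$. Inserting it into both pieces converts them into Laplace-type integrals whose expansions in $1/\log n$ produce $\gamma$, $\zeta(3)$, and $\pi^4$ through the Taylor coefficients of $1/\Gamma$. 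I expect that both quantities behave like $1/L^2$ and $1/L$ respectively, and that the quotient gives the stated series. The bounds $\varepsilon_n<\tfrac12$ (for $n\ge2$) and $\varepsilon_1>\tfrac12$ come from checking the sign of $F(n+\tfrac12)$: explicitly for small $n$, and via the asymptotics with effective error bounds for large $n$. The monotonicity of $\varepsilon_n$ requires comparing $F(n+t)$ with $F(n+1+t)$ termwise, using log-convexity and monotonicity of $n\abs{G_n}$. I would attempt this comparison first and fall back on explicit numerics plus effective asymptotics if it fails.
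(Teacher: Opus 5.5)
The Markov identity via Mittag-Leffler and the indicator argument via $\abs{\im F(z)}\geq\abs{\im z}\,\abs{G_1}/\abs{1-z}^2$ are valid alternatives to the paper's beta-integral and Watson-lemma arguments. Two fixes are needed. On the arc near the negative axis, use $F(z)=\int_0^1\Gamma(1+x)\Gamma(1-z)/\Gamma(1-z+x)\,\dd x$, since $\Gamma(z-x)$ has poles there. Before invoking continuity of the indicator, you also need a global bound such as $\abs{\Gf(z)}\leq 2e^{\pi\abs{\im z}}$.

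The first genuine gap is in (4)--(5). Pairing $-n$ with $\rho_n$ in the genus-one product gives $\Gf(z)=e^{(\gamma-S)z}P(z)$, where $S\coloneqq\sum_n(1/n-1/\rho_n)$ and $P$ is the paired product. Since $P'(0)=S$, matching linear coefficients only returns $\gamma=(\gamma-S)+S$, which is vacuous. Nothing you wrote shows $e^{(\gamma-S)z}\equiv1$, so both the paired-product identity and (5) are unproved. (Laguerre--P\'olya membership survives, since it already follows from the genus-one product with real zeros.) You need growth input. For instance, for non-integer $r>0$ the sine factors cancel and
\[
    e^{(S-\gamma)r}=\frac{\Hf(-r)}{\prod_{n}\bigl(1-\delta_n(r)\bigr)},
    \qquad
    \delta_n(r)\coloneqq\frac{r(\rho_n-n)}{\rho_n(n+r)}.
\]
The Markov formula gives $1/(2(1+r))\leq\Hf(-r)<1$, and $0<\delta_n(r)\leq r/(n(n+r))$ gives $\log\prod_n(1-\delta_n(r))=O(\log r)$. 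Hence $S=\gamma$. This is exactly the normalization $c=0$ in the paper's relative-determinant proof.

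The second gap is that (3) and (6) are only a program. The relation $\varepsilon_n=n\abs{G_n}/R_n(\varepsilon_n)$ is correct, but $R_n$ is a two-sided sum with slowly varying weights and terms of both signs. You give no mechanism for expanding it to relative order $L^{-4}$, which is what the coefficients $\gamma$, $6\zeta(3)$ and $7\pi^4/90$ require; ``I expect'' is not a derivation. The half-integer bounds need the sign of $F(n+\tfrac12)$ for \emph{every} $n$, and no effective error terms are supplied. The termwise comparison for monotonicity also does not work as stated. With $a_k\coloneqq k\abs{G_k}$,
\[
    F(n+1+t)-F(n+t)=-\frac{a_1}{n+t}+\sum_{k\geq1}\frac{a_{k+1}-a_k}{k-n-t},
\]
whose terms change sign at $k=n$, so monotonicity or log-convexity of $(a_k)$ cannot decide the sign.

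The missing idea is the phase representation $\pi\Gf(t)=\sin(\pi t)A(t)-\cos(\pi t)B(t)$, where
\[
    A(t)+iB(t)=\Phi(t)\coloneqq\int_0^1\Gamma(1+x)\frac{\Gamma(t-x)}{\Gamma(t+1)}e^{i\pi x}\,\dd x
\]
and $B>0$, so that $\arg\Phi(\rho_n)=\pi\varepsilon_n$ exactly. From this, each piece follows:
\begin{itemize}
\item \emph{Half-integer bounds.} $F(n+\tfrac12)=(n+\tfrac12)A(n+\tfrac12)$. The sign of $A$ on $(1,\infty)$ follows by pairing $x$ with $1-x$: for $0<x<\tfrac12$, the ratio of the weight at $x$ to the weight at $1-x$ increases in $t$ and equals $1$ at $t=2$.
\item \emph{Monotonicity.} $\arg\Phi$ is strictly decreasing, by a Chebyshev-type symmetrization of $AB'-BA'$ using that $\partial_t\log$ of the weight, $\psi(t-x)-\psi(t+1)$, decreases in $x$. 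This gives $\varepsilon_1>\varepsilon_2>\cdots$ with no numerics.
\item \emph{Asymptotics.} $\Phi(t)=t^{-1}I(\log t)\bigl(1+O(t^{-1})\bigr)$ with $I(L)=\int_0^1\Gamma(1+x)e^{i\pi x}e^{-Lx}\,\dd x$. The finite-interval Watson lemma applied to $\log\bigl(LI(L)\bigr)$ then yields the complete expansion, with coefficients read off from the Taylor series of $\log\Gamma(1+x)+i\pi x$.
\end{itemize}
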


We prove \Cref{thm:main} by analyzing an auxiliary function $\Hf (z)$.

\begin{definition}\label{def:gregory-transform}
The \defi{completed Gregory transform} is the meromorphic function
\begin{equation}\label{eq:completed-transform-definition}
    \Hf(z)
    \coloneqq
    \frac{\pi z}{\sin\pi z}\Gf(z).
\end{equation}
\end{definition}
The function $\Hf (z)$ satisfies a positive Markov-transform identity
\eqref{eq:markov}, proved in \Cref{thm:markov}. Here, a \defi{positive Markov
transform} means the Cauchy--Stieltjes transform of a positive measure;
the underlying discrete probability measure $\mu$ is defined in
\eqref{eq:markov-measure}. The identity
\eqref{eq:markov} is the organizing fact of the paper.
It turns the zero problem into real interlacing, produces a
codimension-one spectral model, and underlies the relative determinant
and trace identities. The phase of the same transform then controls
the finer location and asymptotics of the positive zeros.

Throughout, $\NN\coloneqq\{1,2,\ldots\}$ denotes the natural numbers. We write
$\fallfac{x}{n}\coloneqq x(x-1)\cdots(x-n+1)$ and
$\risefac{x}{n}\coloneqq x(x+1)\cdots(x+n-1)$ for the \defi{falling factorial}
and \defi{rising factorial}, respectively, with empty products equal to $1$.

We close this section with a brief history of the Gregory coefficients. 
The remainder of the paper has two movements. \Cref{sec:completed-transform,sec:zeros,sec:positive-zeros,sec:growth,sec:spectral,sec:determinants}
establish the Markov representation and develop its zero, growth,
spectral, and trace consequences. \Cref{sec:reciprocal,sec:relative-zeta,sec:regularized}
then study three further structures attached to the positive zeros: the
reciprocal transform, the relative zero zeta function, and the
regularized displacement. The final section records the remaining
analytic questions.

\subsection*{Prior work}

The Gregory coefficients arose in James Gregory's work on interpolation
in 1670 and were subsequently rediscovered under several names and
normalizations; Blagouchine surveys this history and its nomenclature
\cite[p.~2]{Blagouchine2017}. The bounded integral in \eqref{eqn-Gn} is closely
related to, but is not itself, the formula usually called Schr\"oder's
integral representation. Schr\"oder studied the unnormalized integral
\[
    n!G_n
    =
    \int_0^1 \fallfac{x}{n} \dd x
\]
and derived the positive representation
\begin{equation}\label{eq:schroeder}
    \abs{G_n}
    =
    \int_0^\infty
    \frac{\dd v}
         {(1+v)^n\bigl((\log v)^2+\pi^2\bigr)}
    \quad\text{for}\quad n \geq 1,
\end{equation}
in his 1880 paper \cite[p.~112]{Schroeder1880}; Blagouchine reproduces
the relevant page and documents the formula's later rediscoveries
\cite[pp.~2--3]{Blagouchine2017}.

Qi and Zhang \cite[Thms.~1--2, pp.~987--988]{QiZhang} recovered \eqref{eq:schroeder} and used it to prove that the sequence
$(a_n)_{n\geq0}$, defined by $a_n\coloneqq\abs{G_{n+1}}$, is
\defi{minimal completely monotone}:
for the \defi{forward difference} $\Delta a_n\coloneqq a_{n+1}-a_n$, we have
\[
    (-1)^k\Delta^k a_n\geq0
    \quad\text{for}\quad n,k \geq 0.
\]
The coefficient asymptotics also predate the present
interpolation problem: Schr\"oder obtained
$\abs{G_n}\sim 1/(n(\log n)^2)$ \cite[p.~115]{Schroeder1880};
Van Veen derived a complete expansion \cite[p.~336 and \S7,
p.~341]{VanVeen1951}, and Nemes states an equivalent modern form
\cite[Thm.~1, p.~2]{Nemes2011}. The present paper uses this classical
information about $(G_n)_{n \geq 0}$ as an input, but studies a different object: the
zeros, growth, and spectral structure of the analytic interpolation
$\Gf$ defined in \Cref{def:gregory-function}.

\section{The completed Gregory transform}\label{sec:completed-transform}

In this section, we prove the identity that drives the rest of the
paper. The signs of the Gregory coefficients turn the completed
interpolation into a positive discrete Markov transform.

\begin{proposition}\label{prop:basic}
The function $\Gf$ is entire. For every nonnegative integer $n$, we have
\[
    \Gf(n)=G_n.
\]
For $n\geq1$, we have
\begin{equation}\label{eq:abs-Gn-sums-to-1}
    (-1)^{n-1}G_n>0
    \quad\text{and}\quad
    \sum_{n=1}^{\infty}\abs{G_n}=1.
\end{equation}
Moreover, we have
\[
    \Gf(0)=1
    \quad\text{and}\quad
    \Gf'(0)=\gamma.
\]
\end{proposition}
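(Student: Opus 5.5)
Entireness comes first. Write $\binom{x}{z}=\Gamma(x+1)/\bigl(\Gamma(z+1)\Gamma(x-z+1)\bigr)$. For fixed $x\in[0,1]$ this is an entire function of $z$, because $1/\Gamma$ is entire and $\Gamma(x+1)$ is finite and positive on $[0,1]$. The integrand is jointly continuous in $(x,z)\in[0,1]\times\CC$. Hence on any compact set of $z$ it is bounded uniformly in $x$. Morera's theorem, or differentiation under the integral sign, then shows that $\Gf$ is entire.

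The interpolation $\Gf(n)=G_n$ reduces to \eqref{eqn-Gn}. At a nonnegative integer $z=n$ the integrand is the polynomial $\fallfac{x}{n}/n!$. I will justify \eqref{eqn-Gn} directly. For $\abs{t}<1$ the binomial series gives $(1+t)^x=\sum_n\binom{x}{n}t^n$, and this converges uniformly in $x\in[0,1]$. Integrating termwise over $x\in[0,1]$ gives
\[
\int_0^1(1+t)^x\dd x=\frac{t}{\log(1+t)}=\sum_n G_nt^n,
\]
and comparing coefficients proves the claim. In particular $\Gf(0)=\int_0^1 1\dd x=1$.

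For the signs, take $n\geq1$ and $x\in(0,1)$. Then $\fallfac{x}{n}=x\prod_{k=1}^{n-1}(x-k)$ has exactly $n-1$ negative factors and is nonzero. So $(-1)^{n-1}\fallfac{x}{n}>0$ on $(0,1)$, and integrating gives $(-1)^{n-1}G_n>0$. For the sum, I would use $\abs{G_n}=(-1)^{n-1}G_n$ and evaluate $\sum_{n\ge1}(-1)^{n-1}G_n$ as the boundary value at $t=-1$ of the generating function. We have $\sum_{n\ge0}G_nt^n=t/\log(1+t)$. At $t=-1$ each term is $G_n(-1)^n=-\abs{G_n}$ for $n\ge1$, so every term after the first has the same sign. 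By Abel's theorem, or monotone convergence since the terms are of one sign, we get $1-\sum_{n\geq1}\abs{G_n}=\lim_{t\to-1^+}t/\log(1+t)=0$. This gives $\sum\abs{G_n}=1$. The step needing care is exactly this monotone/Abel passage to the boundary; the one-signedness of the terms makes it routine.

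For $\Gf'(0)$, differentiate under the integral. We have
\[
\frac{d}{dz}\binom{x}{z}=\binom{x}{z}\bigl(\psi(x-z+1)-\psi(z+1)\bigr).
\]
At $z=0$ this equals $\psi(x+1)-\psi(1)=\psi(x+1)+\gamma$. Hence $\Gf'(0)=\gamma+\int_0^1\psi(x+1)\dd x$. The last integral equals $\log\Gamma(2)-\log\Gamma(1)=0$, so $\Gf'(0)=\gamma$. This evaluation is quick; the main obstacle overall is only the careful justification of \eqref{eqn-Gn} and of the boundary limit.
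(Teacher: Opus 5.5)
Your proof is correct and follows the paper's argument essentially step for step: termwise integration of the binomial series, the sign pattern of the falling factorial, the limit $\sum_{n\ge1}\abs{G_n}r^n=1+r/\log(1-r)\to1$ as $r\to1^-$, and $\int_0^1\psi(x+1)\dd x=0$. One small remark: at the boundary only your monotone-convergence justification is valid. Abel's theorem in its usual direction presupposes the convergence of $\sum\abs{G_n}$, which is exactly what you are trying to establish.
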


\begin{proof}
For fixed $x\in[0,1]$, the function
\[
    z\longmapsto
    \frac{\Gamma(x+1)}
         {\Gamma(z+1)\Gamma(x-z+1)}
\]
is entire. It is uniformly bounded when $x\in[0,1]$ and $z$ ranges
over a compact subset of $\CC$. Integration therefore preserves
holomorphy.

For $\abs{z}<1$, the binomial series may be integrated term by term:
\[
\begin{aligned}
    \sum_{n=0}^{\infty}\Gf(n)z^n
    &=
    \int_0^1\sum_{n=0}^{\infty}\binom{x}{n}z^n\dd x \\
    &=
    \int_0^1(1+z)^x\dd x
    =
    \frac{z}{\log(1+z)}.
\end{aligned}
\]
Thus $\Gf(n)=G_n$.

We have
\begin{equation}\label{eq:binomial-sign}
    \binom{x}{n}
    =
    \frac{\fallfac{x}{n}}{n!}
    =
    \frac{(-1)^{n-1}}{n!}
    x\risefac{1-x}{n-1},
\end{equation}
so $(-1)^{n-1}G_n>0$. Combining \eqref{eqn-Gn} and
\eqref{eq:binomial-sign}, we obtain
\begin{equation}\label{eq:absolute-gregory-integral}
    \abs{G_n}
    =
    \frac1{n!}\int_0^1 x\risefac{1-x}{n-1}\dd x
    \quad\text{for}\quad n \geq 1.
\end{equation}
If $0<r<1$, then
\begin{equation}\label{eq:absolute-gregory-generating-function}
    \sum_{n=1}^{\infty}\abs{G_n}r^n
    =
    1+\frac{r}{\log(1-r)}.
\end{equation}
The right-hand side of \eqref{eq:absolute-gregory-generating-function}
tends to $1$ as $r\to1^-$. By monotone convergence,
\[
    \sum_{n\geq1}\abs{G_n}=1.
\]

Finally, differentiating \eqref{eqn:gregory-function} beneath the
integral sign at $z=0$, we find
\[
    \Gf'(0)
    =
    \int_0^1\bigl(\psi(x+1)-\psi(1)\bigr)\dd x.
\]
Since
\[
    \int_0^1\psi(x+1)\dd x
    =
    \log\Gamma(2)-\log\Gamma(1)=0
\]
and $\psi(1)=-\gamma$, we obtain $\Gf'(0)=\gamma$.
\end{proof}

Combining \eqref{eq:completed-transform-definition} with Euler's
reflection formula produces \eqref{eq:completed-reflection}, the form of
the completed Gregory transform used in \Cref{thm:markov}:
\begin{equation}\label{eq:completed-reflection}
    \Hf(z)
    =
    \Gamma(1-z)\Gamma(1+z)\Gf(z)
    =
    \frac{\pi z}{\sin\pi z}\Gf(z).
\end{equation}
Equivalently, \eqref{eqn:gregory-function} gives
\[
    \Hf(z)
    =
    \Gamma(1-z)
    \int_0^1\frac{\Gamma(x+1)}{\Gamma(x-z+1)}\dd x.
\]
This representation shows that \(\Hf\) is holomorphic on
\(\CC\setminus\NN\); its apparent singularities at the nonpositive
integers are therefore removable.

\begin{theorem}[Markov-transform formula]\label{thm:markov}
For $z\in\CC\setminus\NN$, we have
\begin{equation}\label{eq:markov}
    \Hf(z)
    =
    \sum_{n=1}^{\infty}\frac{n\abs{G_n}}{n-z}
    =
    \sum_{n=1}^{\infty}\frac{\abs{G_n}}{1-z/n}.
\end{equation}
The series converges locally uniformly on $\CC\setminus\NN$.
\end{theorem}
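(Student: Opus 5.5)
We compare both sides of \eqref{eq:markov} through their poles and their growth. The difference is entire, grows at most polynomially on a sequence of circles, and tends to $0$ along the imaginary axis, so it vanishes.

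\emph{Step 1: poles and residues.} By \eqref{eq:completed-reflection}, $\Hf$ is meromorphic on $\CC$ and holomorphic off $\NN$. At $z=n\in\NN$ we have $\sin\pi z\sim\pi(-1)^n(z-n)$. Using \Cref{prop:basic}, this gives
\[
\operatorname{Res}_{z=n}\Hf = n(-1)^nG_n = -n\abs{G_n},
\]
which is exactly the residue of $n\abs{G_n}/(n-z)$ at $z=n$.

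\emph{Step 2: convergence of the series.} On a compact $K\subset\CC\setminus\NN$ we have $\abs{n/(n-z)}\le C_K$ uniformly in $n$. Since $\sum\abs{G_n}=1$ by \eqref{eq:abs-Gn-sums-to-1}, the Weierstrass M-test gives locally uniform convergence. Hence the series $S(z)$ is holomorphic on $\CC\setminus\NN$, with the same principal parts as $\Hf$. It follows that $D\coloneqq\Hf-S$ extends to an entire function.

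\emph{Step 3: $D$ has at most linear growth on circles.} Let $C_N$ be the circle $\abs{z}=N+\tfrac12$.

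For $S$ on $C_N$: every $n$ satisfies $\abs{n-z}\ge\tfrac12$, so $\abs{n\abs{G_n}/(n-z)}\le 2n\abs{G_n}$. Moreover $\abs{n/(n-z)}$ is bounded once $n\ge 2\abs{z}$. Together with $\sum\abs{G_n}=1$ this yields $\abs{S(z)}=O(N)$ on $C_N$.

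For $\Hf$ on $C_N$: in the right half-plane, rewrite the binomial coefficient by reflection as
\[
\binom{x}{z}=\frac{\Gamma(x+1)\sin\pi(z-x)\,\Gamma(z-x)}{\pi\,\Gamma(z+1)},
\]
so that
\[
\Hf(z)=\frac{z}{\sin \pi z}\int_0^1 \Gamma(x+1)\sin\pi(z-x)\frac{\Gamma(z-x)}{\Gamma(z+1)}\dd x .
\]
On $C_N$ the ratio $\sin\pi(z-x)/\sin\pi z$ is bounded uniformly in $x\in[0,1]$, since $C_N$ stays at distance $\tfrac12$ from the integers. Stirling's formula gives $\Gamma(z-x)/\Gamma(z+1)=O(\abs{z}^{-x-1})$, so $\Hf(z)=O(1)$ there.

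In the left half-plane, use $\Hf(z)=\Gamma(1-z)\int_0^1\Gamma(x+1)/\Gamma(x-z+1)\dd x$. Stirling gives $\Gamma(1-z)/\Gamma(x+1-z)=O(\abs{z}^{-x})$, which is again $O(1)$.

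The only genuine obstacle is making these Stirling estimates uniform in $x\in[0,1]$ and near the imaginary axis. This is standard, because $\arg(\pm z)$ stays away from $\pi$ in each half-plane. Thus $\abs{D}=O(N)$ on $C_N$.

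\emph{Step 4: conclusion.} By the maximum principle $\abs{D(z)}=O(\abs z)$ on all of $\CC$, so by Cauchy's estimates $D(z)=a+bz$. To show $D\equiv0$, let $z=iy$ with $y\to\infty$.

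For the series: $\abs{n/(n-iy)}\le1$ and each term tends to $0$, so $S(iy)\to0$ by dominated convergence against $\sum\abs{G_n}$.

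For $\Hf$: the left-half-plane estimate applies on the imaginary axis and gives $\abs{\Hf(iy)}\le C\int_0^1\abs{y}^{-x}\dd x\le C/\log\abs y$, which also tends to $0$.

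Hence $a+biy\to0$ as $y\to\infty$, forcing $a=b=0$. This proves \eqref{eq:markov}; the second form follows by dividing numerator and denominator by $n$.
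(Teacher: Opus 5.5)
Your proof is correct, but it takes a genuinely different route from the paper.

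The paper never looks at poles or growth. It fixes real $z<1$ and uses the beta integral to write $\Hf(z)=\int_0^1\int_0^1 x\,t^{-z}(1-t)^{x-1}\dd t\dd x$. It then expands $(1-t)^{x-1}$ in its binomial series; the coefficients $\risefac{1-x}{k}/k!$ are nonnegative, so Tonelli applies. Integrating $t^{k-z}$ term by term and identifying the coefficients through the integral formula \eqref{eq:absolute-gregory-integral} for $\abs{G_n}$ gives the identity on $(-\infty,1)$, and the identity theorem extends it to $\CC\setminus\NN$.

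You instead give a Mittag-Leffler/Liouville uniqueness argument. The principal parts of $\Hf$ are pinned down by $\Gf(n)=G_n$ and the sign of $G_n$, so $\Hf-S$ is entire. Boundedness of $\Hf$ on the circles $\abs{z}=N+\tfrac12$ then makes the difference at most linear, and decay of both sides along the imaginary axis forces it to vanish.

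The paper's route is shorter and purely real-variable. It needs no asymptotics, and it exhibits the positivity mechanism behind the Markov measure directly: the weights come from nonnegative binomial-series coefficients, which is what the rest of the paper exploits.

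Your route uses less arithmetic input: only the interpolated values $G_n$, their signs, and $\sum\abs{G_n}=1$, not the integral formula for $\abs{G_n}$. It shows that the expansion is forced by the interpolation data together with growth alone. The cost is analytic. You need Stirling gamma-ratio bounds uniform in $x\in[0,1]$ and up to the imaginary axis in both half-planes, plus $\abs{\sin\pi z}\gg e^{\pi\abs{\im z}}$ on the circles. That is essentially the analysis the paper postpones to \Cref{prop:sector}. These estimates are standard, as you say, but they are heavier machinery than the theorem itself requires.
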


\begin{proof}
Suppose first that $z<1$ is real, and recall $\Gamma(x + 1) = x \Gamma(x)$. The beta integral gives, for
$0<x\leq1$,
\begin{equation}\label{eq:beta-integral}
    \frac{\Gamma(x+1)\Gamma(1-z)}{\Gamma(1+x-z)}
    =
    x \int_0^1t^{-z}(1-t)^{x-1}\dd t.
\end{equation}
Integrating \eqref{eq:beta-integral} in $x$, we obtain
\begin{equation}\label{eq:Hf-in-terms-of-beta}
    \Hf(z)
    =
    \int_0^1\int_0^1
    xt^{-z}(1-t)^{x-1}\dd t\dd x.
\end{equation}
For $0\leq x\leq1$ and $0\leq t<1$, the generalized binomial
series reads
\begin{equation}\label{eq:power-series-for-(1-t)^x}
    (1-t)^{x-1}
    =
    \sum_{k=0}^{\infty}\frac{\risefac{1-x}{k}}{k!}t^k.
\end{equation}
All terms in \eqref{eq:power-series-for-(1-t)^x} are nonnegative; by Tonelli's theorem, we may substitute \eqref{eq:power-series-for-(1-t)^x} into \eqref{eq:Hf-in-terms-of-beta} to obtain
\[
    \Hf(z)
    =
    \sum_{k=0}^{\infty}
    \frac{1}{k!(k+1-z)}
    \int_0^1x\risefac{1-x}{k}\dd x.
\]
Writing $n\coloneqq k+1$ and applying
\eqref{eq:absolute-gregory-integral} establishes \eqref{eq:markov} for real
$z<1$.

If $K\subset\CC\setminus\NN$ is compact, then
\[
    \sup_{z\in K}\abs{\frac{n}{n-z}}\leq2
\]
for all sufficiently large $n$. Since $\sum_n \abs{G_n}=1$, the series
converges locally uniformly on $\CC\setminus\NN$ and hence defines a
holomorphic function there. The domain $\CC\setminus\NN$ is connected,
and this function agrees with $\Hf$ on the real interval
$(-\infty,1)$. The identity theorem for holomorphic functions therefore
extends the identity \eqref{eq:markov} to all $z \in \CC\setminus\NN$.
\end{proof}

If
\begin{equation}\label{eq:markov-measure}
    \mu\coloneqq \sum_{n=1}^{\infty}\abs{G_n}\delta_{1/n},
\end{equation}
where $\delta_x$ denotes the \defi{Dirac point mass} at $x$, then $\mu$ is a
probability measure on $[0,1]$, and \Cref{thm:markov} may be written as
\[
    \Hf(z)
    =
    \int_{[0,1]}\frac{1}{1-zt}\dd\mu(t).
\]
Thus $\Hf$ is a discrete Markov transform in the usual
Cauchy--Stieltjes sense, albeit written in the variable $z$ rather than in
the reciprocal spectral variable.

\begin{corollary}[Pick property]\label{cor:pick}
The completed Gregory transform $\Hf$ is a \defi{Pick function}; that is, it is
holomorphic on the upper half-plane and maps it into itself. More
explicitly, if $\im z>0$, then $\im\Hf(z)>0$, while if $\im z<0$,
then $\im\Hf(z)<0$. In particular, $\Hf$ has no nonreal zeros.
\end{corollary}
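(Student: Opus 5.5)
The plan is to read everything off the Markov-transform formula of \Cref{thm:markov}, computing the imaginary part of $\Hf(z)$ term by term. Holomorphy on the upper half-plane is immediate: by \Cref{thm:markov}, $\Hf$ is holomorphic on $\CC\setminus\NN$, and this set contains every nonreal $z$.

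For $z=x+iy$ with $y\neq0$ and $n\in\NN$, I will use
\[
    \im\frac{n}{n-z}=\frac{n\,y}{(n-x)^2+y^2}.
\]
Since \Cref{thm:markov} gives locally uniform convergence on $\CC\setminus\NN$, taking imaginary parts commutes with the sum, and so
\[
    \im\Hf(z)=y\sum_{n=1}^{\infty}\frac{n\abs{G_n}}{(n-x)^2+y^2}.
\]
By \eqref{eq:abs-Gn-sums-to-1} from \Cref{prop:basic}, each $\abs{G_n}$ is strictly positive. Hence every term of this series is strictly positive, and the sum is finite and positive; in fact a single term already suffices, e.g.\ $n=1$ with $\abs{G_1}=1/2$. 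Therefore $\im\Hf(z)$ has the same strict sign as $y=\im z$. This gives both the upper-half-plane and the lower-half-plane statements. The latter could also be obtained from the reflection symmetry $\Hf(\bar z)=\overline{\Hf(z)}$, which holds because the coefficients are real.

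The final claim follows at once. A nonreal zero $z$ would have $\im\Hf(z)=0$, contradicting the strict inequality just proved.

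I do not expect a genuine obstacle. The only point needing care is that the strictness uses $\abs{G_n}>0$ for at least one $n$, which \Cref{prop:basic} supplies, and that the interchange of $\im$ with the infinite sum is justified by the convergence established in \Cref{thm:markov}.
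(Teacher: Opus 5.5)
Your proposal is correct and follows the paper's proof: take imaginary parts termwise in the Markov formula \eqref{eq:markov} to get $\im\Hf(z)=y\sum_{n}n\abs{G_n}/\bigl((n-x)^2+y^2\bigr)$, which has the strict sign of $y$. The only difference is that the paper obtains the lower half-plane case by conjugation, whereas you read both half-planes directly off this single formula (pointwise convergence of the series already suffices to move $\im$ inside the sum).
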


\begin{proof}
Put $u\coloneqq\re z$ and $v\coloneqq\im z$, so $z=u+iv$ and $v>0$. Taking imaginary parts in
\eqref{eq:markov}, we obtain
\[
    \im\Hf(z)
    =
    v\sum_{n=1}^{\infty}
    \frac{n\abs{G_n}}{(n-u)^2+v^2}>0.
\]
The lower-half-plane assertion follows by conjugation.
\end{proof}

\section{Zeros and interlacing}\label{sec:zeros}

The representation \eqref{eq:markov} immediately imposes a zero
pattern on $\Gf$. The positive zeros interlace with the positive integers, 
while the negative zeros come from the sine factor in the completion.

\begin{theorem}\label{thm:zeros}
Every zero of $\Gf$ is real and simple. More precisely, we have
\begin{equation}\label{eq:interlacing-statment}
    Z(\Gf)
    =
    \{-1,-2,-3,\ldots\}
    \cup
    \{\rho_n:n\geq1\},
\end{equation}
where $\rho_n$ is the unique zero of $\Hf$ in $(n,n+1)$.
\end{theorem}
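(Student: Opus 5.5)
We read off the zeros of $\Gf$ from the relation $\Gf(z)=\frac{\sin\pi z}{\pi z}\Hf(z)$ together with the Markov formula \eqref{eq:markov}. The argument treats the nonreal zeros first, then the integers, and then the real non-integers.

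\emph{Nonreal zeros.} If $z\notin\RR$, then $\sin(\pi z)/(\pi z)\neq0$. By \Cref{cor:pick}, $\Hf(z)\neq0$. Hence $\Gf(z)\neq0$, and every zero of $\Gf$ is real.

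\emph{Integer points.} At $z=0$ we have $\Gf(0)=1$. At $z=n\in\NN$ we have $\Gf(n)=G_n\neq0$ by \Cref{prop:basic}. This agrees with the completion: $\Hf$ has a simple pole at $n$ with residue $-n\abs{G_n}\neq0$, and this pole cancels the simple zero of $\sin\pi z$.

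At $z=-m$ with $m\in\NN$, the point lies in $(-\infty,1)$, where \eqref{eq:markov} gives
\[
\Hf(-m)=\sum_{n\ge1}\frac{n\abs{G_n}}{n+m}>0 .
\]
The factor $\sin(\pi z)/(\pi z)$ has a simple zero at $-m$. Therefore $\Gf$ has a simple zero at each negative integer.

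\emph{Real non-integers.} For real $x\notin\mathbb{Z}$, the factor $\sin(\pi x)/(\pi x)$ is nonzero, so the zeros of $\Gf$ there are exactly the zeros of $\Hf$, with the same multiplicities. We split into three cases.

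\begin{itemize}
\item For $x<1$, every term $n\abs{G_n}/(n-x)$ in \eqref{eq:markov} is positive, so $\Hf(x)>0$.

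\item On each interval $(N,N+1)$ with $N\in\NN$, differentiating the locally uniformly convergent series gives
\[
\Hf'(x)=\sum_{n\ge1}\frac{n\abs{G_n}}{(n-x)^2}>0 ,
\]
so $\Hf$ is strictly increasing there. As $x\to N^+$, the term $N\abs{G_N}/(N-x)\to-\infty$, while the remaining terms stay bounded near $N$. Hence $\Hf(x)\to-\infty$. Symmetrically, as $x\to(N+1)^-$, the term with $n=N+1$ tends to $+\infty$, so $\Hf(x)\to+\infty$. The intermediate value theorem and strict monotonicity give exactly one zero $\rho_N\in(N,N+1)$. Since $\Hf'(\rho_N)>0$, this zero is simple.
\end{itemize}

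Collecting the three cases yields \eqref{eq:interlacing-statment} and simplicity of every zero.

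The only step needing care is the boundary behaviour at the poles. There we must justify that the tail $\sum_{n\neq N}n\abs{G_n}/(n-x)$ is bounded uniformly for $x$ near $N$. This follows from $\abs{n/(n-x)}\le 2$ for large $n$ and $\sum\abs{G_n}=1$, exactly as in the proof of \Cref{thm:markov}.
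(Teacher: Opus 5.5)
Your proof is correct and follows the paper's argument essentially step for step: positivity of $\Hf$ on $(-\infty,1)$ from the Markov series, strict monotonicity and the $\mp\infty$ boundary limits on each $(N,N+1)$, exclusion of nonreal zeros via \Cref{cor:pick}, and the bookkeeping of the factor $\sin(\pi z)/(\pi z)$ at the integers. Your explicit bound on the tail $\sum_{n\neq N} n\abs{G_n}/(n-x)$ near $N$ spells out a detail the paper leaves implicit; note also that the announced ``three cases'' are really the two bullets you give, and those two already cover all real non-integers.
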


\begin{proof}
For real $x<1$, every summand in
\[
    \Hf(x)=\sum_{n=1}^{\infty}\frac{n\abs{G_n}}{n-x}
\]
is positive. Thus $\Hf(x)>0$.

On each interval $(n,n+1)$, the series may be differentiated term by
term, and
\[
    \Hf'(x)
    =
    \sum_{m=1}^{\infty}\frac{m\abs{G_m}}{(m-x)^2}>0.
\]
Moreover,
\[
    \lim_{x\to n^+}\Hf(x)=-\infty
    \quad\text{and}\quad
    \lim_{x\to (n+1)^-}\Hf(x)=+\infty.
\]
Hence $\Hf$ has exactly one zero $\rho_n$ in $(n,n+1)$, and that
zero is simple. \Cref{cor:pick} excludes nonreal zeros.

Since
\[
    \Gf(z)=\frac{\sin\pi z}{\pi z}\Hf(z),
\]
the zero of $\sin\pi z$ at a positive integer cancels the simple pole
of $\Hf$. Indeed, we have
\[
    \Gf(n)=G_n\neq0.
\]
At a negative integer $-n$, the function $\Hf$ is finite and positive,
so the sine factor contributes a simple zero. This accounts for every
zero of $\Gf$.
\end{proof}

Let $n_{\Gf}(r)$ denote the number of zeros of $\Gf$ in $\abs{z}\leq r$, counted with multiplicity.

\begin{corollary}\label{cor:counting}
Put
\[
    N\coloneqq\lfloor r\rfloor\geq1,
    \qquad
    \alpha\coloneqq r-N,
    \qquad
    0\leq\alpha<1.
\]
Then
\[
    n_{\Gf}(r)
    =
    2N-1+\mathbf{1}_{\{\alpha\geq\rho_N-N\}}.
\]
In particular, we have
\begin{equation}\label{eq:zero-counting-asymptotic}
    n_{\Gf}(r)=2r+O(1).
\end{equation}
\end{corollary}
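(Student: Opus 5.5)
The count follows directly from the explicit zero set in \Cref{thm:zeros}. Since every zero of $\Gf$ is real and simple, $n_{\Gf}(r)$ equals the number of points of
\[
    \{-1,-2,\ldots\}\cup\{\rho_n:n\geq1\}
\]
lying in $[-r,r]$. We count the two families separately, using $N=\lfloor r\rfloor$ and $r=N+\alpha$ with $0\leq\alpha<1$.

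The negative zeros are the integers $-m$ with $1\leq m\leq r$. Since $m$ is an integer, this is equivalent to $m\leq N$, which gives exactly $N$ zeros.

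For the positive zeros, every $\rho_n$ is positive, so we only need to know when $\rho_n\leq r$. By \Cref{thm:zeros}, $n<\rho_n<n+1$.
\begin{itemize}
    \item If $n\leq N-1$, then $\rho_n<n+1\leq N\leq r$, so $\rho_1,\dots,\rho_{N-1}$ are all counted. That is $N-1$ zeros.
    \item If $n\geq N+1$, then $\rho_n>n\geq N+1>r$, so none of these is counted.
    \item The only undetermined case is $\rho_N$, which lies in $(N,N+1)$. We have $\rho_N\leq r=N+\alpha$ exactly when $\alpha\geq\rho_N-N$.
\end{itemize}
Adding the two families gives
\[
    n_{\Gf}(r)=N+(N-1)+\mathbf{1}_{\{\alpha\geq\rho_N-N\}}=2N-1+\mathbf{1}_{\{\alpha\geq\rho_N-N\}},
\]
which is the stated formula. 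The hypothesis $N\geq1$ ensures that the index range $n\leq N-1$ and the zero $\rho_N$ both make sense.

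For the asymptotic \eqref{eq:zero-counting-asymptotic}, note that $2N=2r-2\alpha$ with $0\leq\alpha<1$, and the indicator term is $0$ or $1$. Hence $n_{\Gf}(r)-2r$ lies in $[-3,0]$ for $r\geq1$. When $0\leq r<1$ there are no zeros, since the smallest zeros in absolute value are $-1$ and $\rho_1>1$. So the $O(1)$ bound holds for all $r\geq0$.

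The only step that needs care is the boundary case, namely deciding whether $\rho_N$ is counted. This is settled entirely by the strict interlacing $n<\rho_n<n+1$ from \Cref{thm:zeros}; no finer information about the location of $\rho_N$ is needed.
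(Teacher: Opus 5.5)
Your proof is correct. It is the same direct count from the zero set in \Cref{thm:zeros} that the paper relies on: the paper states this corollary without a separate proof, treating it as an immediate consequence of real, simple zeros at $-1,-2,\ldots$ and exactly one $\rho_n$ in each $(n,n+1)$.
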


\section{Positive zeros: location, monotonicity, and asymptotics}\label{sec:positive-zeros}

In this section, we sharpen the interlacing statement \eqref{eq:interlacing-statment} in three steps. We first locate the positive zeros, then prove that their displacements decrease, and finally compute the complete
logarithmic asymptotic expansion of those displacements.

\begin{definition}\label{def:displacement}
For the $n$th positive zero of $\Gf$, put
\[
    \varepsilon_n\coloneqq\rho_n-n,
\]
so $0<\varepsilon_n<1$. We call $\varepsilon_n$ the $n$th
\defi{positive-zero displacement}.
\end{definition}

For $t>1$, put
\[
    w_t(x)
    \coloneqq 
    \Gamma(1+x)\frac{\Gamma(t-x)}{\Gamma(t+1)}
\]
and define
\[
    A(t)\coloneqq \int_0^1w_t(x)\cos\pi x\dd x
    \quad\text{and}\quad
    B(t)\coloneqq \int_0^1w_t(x)\sin\pi x\dd x.
\]
Euler's reflection formula implies
\begin{equation}\label{eq:phase}
    \pi\Gf(t)
    =
    \sin\pi t\,A(t)-\cos\pi t\,B(t).
\end{equation}
Clearly $B(t)>0$.

\begin{lemma}\label{lem:A-sign}
We have
\[
    A(t)<0 \quad\text{for}\quad 1 < t < 2,
    \qquad
    A(2)=0,
    \qquad
    A(t)>0 \quad\text{for}\quad t > 2.
\]
\end{lemma}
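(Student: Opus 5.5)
The plan is to use the antisymmetry of $\cos\pi x$ about $x=\tfrac12$ to reduce the sign of $A(t)$ to a pointwise comparison of $w_t(x)$ with $w_t(1-x)$. That comparison then follows from the monotonicity of the digamma function.

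First I fold the integral. Since $\cos\pi(1-x)=-\cos\pi x$, substituting $x\mapsto 1-x$ on $[\tfrac12,1]$ gives
\[
    A(t)=\int_0^{1/2}\bigl(w_t(x)-w_t(1-x)\bigr)\cos\pi x\dd x .
\]
Here $\cos\pi x>0$ on $(0,\tfrac12)$. So it suffices to show that $w_t(x)-w_t(1-x)$ is negative, zero, or positive for every $x\in(0,\tfrac12)$ according as $1<t<2$, $t=2$, or $t>2$. Because $w_t>0$ for $t>1$ and $0\le x\le 1$, I will compare logarithms. Put
\[
    D(t,x)\coloneqq \log w_t(x)-\log w_t(1-x)
    =\log\Gamma(1+x)-\log\Gamma(2-x)+\log\Gamma(t-x)-\log\Gamma(t-1+x).
\]
The common factor $1/\Gamma(t+1)$ cancels in this difference. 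All gamma arguments are positive when $t>1$ and $0<x<\tfrac12$.

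Next I evaluate at $t=2$ and differentiate in $t$. At $t=2$ the last two terms are $\log\Gamma(2-x)-\log\Gamma(1+x)$, which cancel the first two exactly. Hence $D(2,x)=0$ for all $x$, and therefore $A(2)=0$. For fixed $x\in(0,\tfrac12)$ we have
\[
    \partial_t D(t,x)=\psi(t-x)-\psi(t-1+x).
\]
Since $x<\tfrac12$, we have $t-x>t-1+x>0$. Because $\psi$ is strictly increasing on $(0,\infty)$ (as $\psi'(s)=\sum_{k\ge0}(s+k)^{-2}>0$), this derivative is strictly positive. So $t\mapsto D(t,x)$ is strictly increasing on $(1,\infty)$ and vanishes at $t=2$. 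Consequently $D<0$ for $1<t<2$ and $D>0$ for $t>2$, for every $x\in(0,\tfrac12)$. This gives the strict signs of the integrand, and integrating yields the three claims.

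I do not expect a real obstacle. The one point to check carefully is that all gamma arguments stay positive throughout $t>1$, $x\in[0,\tfrac12]$, so that $w_t$ is positive and the logarithms are legitimate; the smallest argument is $t-1+x>0$, so this holds. I also need the strict inequality on a set of positive measure, which holds because it is strict on all of $(0,\tfrac12)$.
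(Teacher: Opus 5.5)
Your proposal is correct and follows essentially the same route as the paper. You fold the integral about $x=\tfrac12$, note that the ratio $w_t(x)/w_t(1-x)$ equals $1$ at $t=2$, and use the positive $t$-logarithmic derivative $\psi(t-x)-\psi(t-1+x)$ to get the signs. The paper compresses this into a few lines; your added checks that all gamma arguments are positive and that the inequality is strict on all of $(0,\tfrac12)$ are the right details to fill in.
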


\begin{proof}
Pairing $x$ and $1-x$, we find
\[
    A(t)
    =
    \int_0^{1/2}
    \cos\pi x\bigl(w_t(x)-w_t(1-x)\bigr)
    \dd x.
\]
For $0<x<1/2$, we have
\begin{equation}\label{eq:weight-ratio}
    \frac{w_t(x)}{w_t(1-x)}
    =
    \frac{\Gamma(1+x)\Gamma(t-x)}
         {\Gamma(2-x)\Gamma(t-1+x)}.
\end{equation}
The ratio in \eqref{eq:weight-ratio} equals $1$ at $t=2$. Its
logarithmic derivative in $t$ is
\[
    \psi(t-x)-\psi(t-1+x)>0.
\]
The result follows.
\end{proof}

\begin{corollary}\label{cor:half}
We have
\begin{equation}\label{eq:first-zero-half-bound}
    \frac32<\rho_1<2
\end{equation}
and
\[
    n<\rho_n<n+\frac12
    \quad\text{for}\quad n \geq 2.
\]
\end{corollary}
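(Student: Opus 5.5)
The plan is to read off the sign of $\Gf$ at the points $n$, $n+\tfrac12$ and $n+1$ from the phase formula \eqref{eq:phase}, using \Cref{lem:A-sign}. Throughout I use that $\rho_n$ is the unique zero of $\Hf$ in $(n,n+1)$, from \Cref{thm:zeros}, and that $\Hf$ is strictly increasing there, running from $-\infty$ to $+\infty$. On $(n,n+1)$ we have $\Hf(t)=\frac{\pi t}{\sin\pi t}\Gf(t)$, where the factor $\frac{\pi t}{\sin \pi t}$ has the constant sign $(-1)^n$. So for $t\in(n,n+1)$, $\Hf(t)<0$ exactly when $t<\rho_n$. Equivalently, $(-1)^n\Gf(t)<0$ for $t<\rho_n$ and $(-1)^n\Gf(t)>0$ for $t>\rho_n$.

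First I evaluate \eqref{eq:phase} at the midpoint $t=n+\tfrac12$. There $\cos\pi t=0$ and $\sin\pi t=(-1)^n$, so
\[
\pi\Gf(n+\tfrac12)=(-1)^nA(n+\tfrac12).
\]
For $n\ge2$ we have $n+\tfrac12>2$, so $A(n+\tfrac12)>0$ by \Cref{lem:A-sign}. Hence $(-1)^n\Gf(n+\tfrac12)>0$, and by the sign rule above $\rho_n<n+\tfrac12$. The lower bound $\rho_n>n$ is already part of \Cref{thm:zeros}.

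For $n=1$, the point $t=\tfrac32$ lies in $(1,2)$, so $A(\tfrac32)<0$ and $-\Gf(\tfrac32)>0$. This gives $(-1)^1\Gf(\tfrac32)>0$ with the opposite inequality to the one needed for $t>\rho_1$. By the sign rule it means $\tfrac32<\rho_1$. The upper bound $\rho_1<2$ again comes from \Cref{thm:zeros}.

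I would double-check the sign convention directly, since it is the only place where an error could creep in. As $t\to1^+$, every term in the Markov series \eqref{eq:markov} stays bounded except the $m=1$ term, which tends to $-\infty$; so $\Hf\to-\infty$, consistent with $\Hf<0$ just to the right of $1$. The factor $\sin\pi t$ is negative on $(1,2)$, so $\Hf$ and $\Gf$ have opposite signs there. Thus $\Gf>0$ on $(1,\rho_1)$, and it suffices to check $\Gf(\tfrac32)>0$. This is exactly what $\pi\Gf(\tfrac32)=-A(\tfrac32)>0$ says.

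There is no real obstacle here. The only care needed is the sign bookkeeping between $\Gf$ and $\Hf$ and the edge case $n=1$, where $A$ changes sign. Note that $A(2)=0$ plays no role, since the midpoints $n+\tfrac12$ never equal $2$.
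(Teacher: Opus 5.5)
Your argument is correct and is essentially the paper's. At $t=n+\tfrac12$ the phase formula \eqref{eq:phase} gives $\Hf(n+\tfrac12)=(n+\tfrac12)A(n+\tfrac12)$, so the sign of $A$ from \Cref{lem:A-sign} and the monotonicity of $\Hf$ on $(n,n+1)$ place $\rho_n$ directly; working with $\Hf$ rather than $\Gf$ also avoids the $(-1)^n$ bookkeeping. That bookkeeping slipped once in your $n=1$ paragraph. There, $\pi\Gf(\tfrac32)=-A(\tfrac32)>0$ actually gives $-\Gf(\tfrac32)<0$, not $>0$, which is exactly what your sign rule needs for $\tfrac32<\rho_1$; your double-check paragraph already states this correctly.
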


\begin{proof}
At a half-integer $t=n+1/2$, \eqref{eq:phase} specializes to
\[
    \Hf(t)=tA(t).
\]
The function $\Hf$ is strictly increasing on $(n,n+1)$.
\Cref{lem:A-sign} places $\rho_1$ to the right of $3/2$, and every
$\rho_n$, $n\geq2$, to the left of $n+1/2$.
\end{proof}

\begin{lemma}\label{lem:theta-decreasing}
Let
\[
    \Phi(t)\coloneqq A(t)+iB(t),
\]
and let $\theta(t)\coloneqq\arg \Phi(t)$ be the \defi{continuous argument} in $(0,\pi)$.
Then
\begin{equation}\label{eq:theta-decreasing}
    \theta'(t)<0
    \quad\text{for}\quad t > 1.
\end{equation}
\end{lemma}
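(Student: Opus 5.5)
The plan is to compute $\theta'$ directly from $\theta'(t)=\im\bigl(\Phi'(t)/\Phi(t)\bigr)$ and to reduce its sign to a symmetric double integral whose integrand has a fixed sign. Since $B(t)>0$, $\Phi(t)\neq0$ for $t>1$, so $\theta$ is a smooth branch of the argument. Moreover,
\[
    \theta'(t)=\frac{A(t)B'(t)-A'(t)B(t)}{A(t)^2+B(t)^2}.
\]
So it suffices to show that $A B'-A' B<0$ for $t>1$.

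\emph{Step 1: differentiate the weight.} Write $\Phi(t)=\int_0^1 w_t(x)e^{i\pi x}\dd x$. Differentiating $\log w_t(x)=\log\Gamma(1+x)+\log\Gamma(t-x)-\log\Gamma(t+1)$ in $t$ gives
\[
    \partial_t w_t(x)=w_t(x)\,f_t(x),
    \qquad
    f_t(x)\coloneqq\psi(t-x)-\psi(t+1).
\]
Differentiation under the integral is justified because $t-x\geq t-1>0$ stays in a compact subset of $(0,\infty)$ for $t$ in a compact subset of $(1,\infty)$. Hence
\[
    A'(t)=\int_0^1 w_t f_t\cos\pi x\dd x
    \quad\text{and}\quad
    B'(t)=\int_0^1 w_t f_t\sin\pi x\dd x.
\]

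\emph{Step 2: reduce to a double integral and symmetrize.} Expanding the products as double integrals over $[0,1]^2$ gives
\[
    AB'-A'B=\int_0^1\!\!\int_0^1 w_t(x)w_t(y)f_t(y)\bigl(\cos\pi x\sin\pi y-\sin\pi x\cos\pi y\bigr)\dd x\dd y .
\]
The bracket equals $\sin\pi(y-x)$. Interchanging $x$ and $y$ and averaging the two expressions yields
\[
    AB'-A'B=\frac12\int_0^1\!\!\int_0^1 w_t(x)w_t(y)\bigl(f_t(y)-f_t(x)\bigr)\sin\pi(y-x)\dd x\dd y .
\]

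\emph{Step 3: sign.} Since $\psi$ is strictly increasing on $(0,\infty)$, the function $f_t$ is strictly decreasing in $x\in[0,1]$. For $x,y\in[0,1]$ we have $\abs{y-x}\leq1$, so $\sin\pi(y-x)$ has the sign of $y-x$. On the other hand, $f_t(y)-f_t(x)$ has the opposite sign. The weights $w_t$ are positive. The integrand is therefore $\leq0$ everywhere and strictly negative off the diagonal and the two corners where $\abs{y-x}=1$, which gives $AB'-A'B<0$ and hence \eqref{eq:theta-decreasing}.

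I do not expect a real obstacle here. The only point needing care is recognizing the antisymmetric kernel $\sin\pi(y-x)$ after expanding $AB'-A'B$, together with the justification of differentiation under the integral sign. The latter is routine, since $t-x$ is bounded away from $0$.
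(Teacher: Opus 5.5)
Your proposal is correct and follows essentially the same route as the paper: you write $\theta'=(AB'-A'B)/(A^2+B^2)$, use $\partial_t w_t=(\psi(t-x)-\psi(t+1))w_t$, symmetrize $AB'-A'B$ into a double integral against the antisymmetric kernel $\sin\pi(y-x)$, and conclude from the strict monotonicity of $\psi$. Your remarks on differentiating under the integral sign and on the measure-zero set where the integrand vanishes (the diagonal and the corners $\abs{y-x}=1$) are accurate details that the paper leaves implicit.
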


\begin{proof}
Since $B(t)>0$, the argument is well-defined and differentiable, and
\[
    \theta'(t)
    =
    \im\frac{\Phi'(t)}{\Phi(t)}
    =
    \frac{A(t)B'(t)-B(t)A'(t)}{A(t)^2+B(t)^2}.
\]
With
\[
    h_t(x)\coloneqq \psi(t-x)-\psi(t+1),
\]
we have $\partial w_t(x)/\partial t=h_t(x)w_t(x)$, and $h_t$ is
strictly decreasing in $x$. Hence
\[
\begin{aligned}
    A(t)B'(t)-B(t)A'(t)
    ={}&
    \frac12\int_0^1\int_0^1
    w_t(x)w_t(y) \\
    &\quad\cdot
    \bigl(h_t(x)-h_t(y)\bigr)
    \sin\pi(x-y)\dd x\dd y.
\end{aligned}
\]
If $x>y$, then $h_t(x)<h_t(y)$ and
$\sin\pi(x-y)>0$. The integrand is therefore negative away from the
diagonal, and so $\theta'(t)<0$.
\end{proof}

\begin{theorem}[Strict displacement monotonicity]\label{thm:displacements}
The displacement sequence $(\varepsilon_n)_{n\geq1}$ is strictly decreasing.
\end{theorem}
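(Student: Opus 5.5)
The plan is to read the displacements off directly from the phase identity \eqref{eq:phase}, using the argument $\theta$ from \Cref{lem:theta-decreasing}. Since $B(t)>0$ for $t>1$, we may write $A(t)=\abs{\Phi(t)}\cos\theta(t)$ and $B(t)=\abs{\Phi(t)}\sin\theta(t)$ with $\theta(t)\in(0,\pi)$ and $\abs{\Phi(t)}>0$. Substituting these into \eqref{eq:phase} gives
\[
    \pi\Gf(t)=\abs{\Phi(t)}\,\sin\bigl(\pi t-\theta(t)\bigr)
    \quad\text{for}\quad t>1 .
\]
Hence for $t>1$ we have $\Gf(t)=0$ if and only if $\pi t-\theta(t)\in\pi\mathbb{Z}$.

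Next I identify which integer multiple of $\pi$ occurs at each $\rho_n$. By \Cref{thm:zeros}, $\rho_n\in(n,n+1)$ and $\Gf(\rho_n)=0$; in particular $\rho_n>1$, so the identity above applies. Since $\theta(\rho_n)\in(0,\pi)$, we get
\[
    \pi\rho_n-\theta(\rho_n)\in\bigl((n-1)\pi,(n+1)\pi\bigr).
\]
The only multiple of $\pi$ in this open interval is $n\pi$. Therefore $\pi\rho_n-\theta(\rho_n)=n\pi$, that is,
\[
    \varepsilon_n=\rho_n-n=\frac{\theta(\rho_n)}{\pi}.
\]

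Finally, $\rho_{n+1}>n+1>\rho_n>1$, and $\theta$ is strictly decreasing on $(1,\infty)$ by \Cref{lem:theta-decreasing}. It follows that
\[
    \varepsilon_{n+1}=\frac{\theta(\rho_{n+1})}{\pi}<\frac{\theta(\rho_n)}{\pi}=\varepsilon_n .
\]
The inequality $\varepsilon_n>0$ already holds by \Cref{def:displacement}.

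I do not expect a real obstacle here, since the analytic work was done in \Cref{lem:theta-decreasing}. The one point needing care is the bookkeeping that pins the multiple to exactly $n\pi$. This rests on $\theta$ being the continuous branch with values in $(0,\pi)$, which is guaranteed by $B>0$, together with the interlacing from \Cref{thm:zeros}.
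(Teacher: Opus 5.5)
Your proof is correct and follows the paper's argument. The paper likewise evaluates \eqref{eq:phase} at $t=\rho_n$ to obtain $\theta(\rho_n)=\pi\varepsilon_n$, settles the ambiguity modulo $\pi$ by noting that both sides lie in $(0,\pi)$, and then applies \Cref{lem:theta-decreasing}; your rewriting $\pi\Gf(t)=\abs{\Phi(t)}\sin(\pi t-\theta(t))$ just spells out that branch step.
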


\begin{proof}
At $t=\rho_n$, \eqref{eq:phase} becomes
\begin{equation}\label{eq:phase-at-positive-zero}
    \theta(\rho_n)=\pi\varepsilon_n.
\end{equation}
Both sides of \eqref{eq:phase-at-positive-zero} lie in $(0,\pi)$, so
there is no ambiguity modulo $\pi$.
By \eqref{eq:theta-decreasing} and $\rho_{n+1}>\rho_n$, we obtain
\[
    \varepsilon_{n+1}
    =\frac{\theta(\rho_{n+1})}{\pi}
    <\frac{\theta(\rho_n)}{\pi}
    =\varepsilon_n.
\]
\end{proof}

Equation \eqref{eq:displacement-asymptotic} below computes the phase
$\theta(\rho_n)=\pi\varepsilon_n$ asymptotically. Olver's sectorial
Watson lemma and its explicit remainder estimate appear in
\cite[Ch.~4, \S\S3.2--3.5, especially Theorems~3.2--3.3 and
Eq.~(3.07)]{Olver1974}. We need only the following elementary
finite-interval variant, including the one-derivative estimate needed
in \Cref{prop:eventual}.

\begin{lemma}[Finite-interval Watson expansion with one derivative]\label{lem:watson-differentiated}
Let $f\in C^\infty([0,1])$, and put
\[
    I_f(L)\coloneqq \int_0^1 f(x)e^{-Lx}\dd x
    \quad\text{for}\quad L > 0.
\]
For every integer $N\geq0$, as $L\to\infty$, we have
\begin{equation}\label{eq:watson-finite-expansion}
    I_f(L)
    =
    \sum_{k=0}^{N}\frac{f^{(k)}(0)}{L^{k+1}}
    +O_{f,N}(L^{-N-2}),
\end{equation}
and
\begin{equation}\label{eq:watson-differentiated-expansion}
    I_f'(L)
    =
    -\sum_{k=0}^{N}
    \frac{(k+1)f^{(k)}(0)}{L^{k+2}}
    +O_{f,N}(L^{-N-3}).
\end{equation}
\end{lemma}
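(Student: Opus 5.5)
We will prove \eqref{eq:watson-finite-expansion} by repeated integration by parts, and then obtain \eqref{eq:watson-differentiated-expansion} by applying the first expansion to a second smooth function. No sectorial machinery is needed, because the interval is compact and the kernel $e^{-Lx}$ is real and decaying.

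For the first expansion, integrate by parts $N+1$ times, each time integrating $e^{-Lx}$ and differentiating $f$. A single step gives
\[
    \int_0^1 g(x)e^{-Lx}\dd x
    =
    \frac{g(0)}{L}-\frac{g(1)e^{-L}}{L}
    +\frac1L\int_0^1 g'(x)e^{-Lx}\dd x.
\]
We apply this with $g=f,f',\ldots,f^{(N)}$ in turn. This produces the boundary terms $f^{(k)}(0)/L^{k+1}$ for $0\le k\le N$. It also produces endpoint contributions $-f^{(k)}(1)e^{-L}/L^{k+1}$, and these are $O_{f,N}(e^{-L})$. The remaining term is
\[
    L^{-N-1}\int_0^1 f^{(N+1)}(x)e^{-Lx}\dd x .
\]
It is bounded by
\[
    L^{-N-1}\,\|f^{(N+1)}\|_\infty\int_0^\infty e^{-Lx}\dd x
    =
    \|f^{(N+1)}\|_\infty\, L^{-N-2}.
\]
Since $e^{-L}=O(L^{-N-2})$, this yields \eqref{eq:watson-finite-expansion}. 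Alternatively, one can take $N+1$ steps and absorb the $k=N+1$ boundary term into the error; either bookkeeping works.

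For the derivative, we first justify differentiation under the integral sign. This holds for $L>0$ because $\partial_L\bigl(f(x)e^{-Lx}\bigr)=-xf(x)e^{-Lx}$ is bounded uniformly for $L$ in compact subsets of $(0,\infty)$. Hence
\[
    I_f'(L)=-I_g(L),\qquad g(x)\coloneqq xf(x)\in C^\infty([0,1]).
\]
By Leibniz's rule, $g^{(k)}(0)=k f^{(k-1)}(0)$ for $k\ge1$, and $g(0)=0$. Applying \eqref{eq:watson-finite-expansion} to $g$ with $N+1$ in place of $N$ gives
\[
    I_g(L)=\sum_{k=1}^{N+1}\frac{k f^{(k-1)}(0)}{L^{k+1}}+O(L^{-N-3}).
\]
After reindexing $k\mapsto k+1$, this is exactly $\sum_{k=0}^{N}(k+1)f^{(k)}(0)/L^{k+2}$, and negating gives \eqref{eq:watson-differentiated-expansion}.

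There is no serious obstacle here. The only points needing care are two pieces of bookkeeping. The first is to check that the endpoint terms at $x=1$ are exponentially small, and so can be absorbed into the polynomial error. The second is to match the index shift, so that the error term in the derivative is $O(L^{-N-3})$ rather than $O(L^{-N-2})$; this is why the first expansion must be applied to $g$ at order $N+1$. All the implied constants depend only on finitely many sup norms of derivatives of $f$, as claimed.
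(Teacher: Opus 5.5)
Your proof is correct. Your treatment of the derivative is exactly the paper's: differentiate under the integral, then apply the first expansion to $g(x)=xf(x)$ at order $N+1$, using $g(0)=0$ and $g^{(k+1)}(0)=(k+1)f^{(k)}(0)$.

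The difference lies in the first expansion. The paper Taylor-expands $f$ at the origin as $\sum_{k\le N}f^{(k)}(0)x^k/k!+x^{N+1}r_N(x)$ with $r_N$ bounded. It then evaluates the moments $\int_0^1x^ke^{-Lx}\dd x=k!/L^{k+1}+O_k(e^{-L/2})$ and bounds $\int_0^1x^{N+1}e^{-Lx}\dd x=O_N(L^{-N-2})$. You instead integrate by parts repeatedly.

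What each route buys:
\begin{itemize}
\item Your route gives an exact identity,
\[
I_f(L)=\sum_{k=0}^{N}\frac{f^{(k)}(0)}{L^{k+1}}-e^{-L}\sum_{k=0}^{N}\frac{f^{(k)}(1)}{L^{k+1}}+L^{-N-1}I_{f^{(N+1)}}(L).
\]
Its error constant is fully explicit: $\lVert f^{(N+1)}\rVert_\infty$ plus exponentially small endpoint terms. The cost is that it uses regularity of $f$ on all of $[0,1]$.
\item The paper's route localizes at $x=0$. It only needs the Taylor remainder $r_N$ to be bounded, so it works for $f$ that is smooth near the origin and merely bounded elsewhere. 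It is also the form that extends to the infinite-interval and sectorial Watson lemma of Olver, which the paper cites.
\end{itemize}
For the smooth, complex-valued $f(x)=\Gamma(1+x)e^{i\pi x}$ used later, both routes work equally well.

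One harmless slip: in your aside, absorbing the $k=N+1$ boundary term into the error requires $N+2$ integrations by parts, not $N+1$. The steps $g=f,\dots,f^{(N)}$ already account for $N+1$.
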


\begin{proof}
By Taylor's theorem, we have
\[
    f(x)
    =
    \sum_{k=0}^{N}\frac{f^{(k)}(0)}{k!}x^k
    +x^{N+1}r_N(x),
\]
where $r_N$ is bounded on $[0,1]$. For each fixed $k$, the change of variables $y\coloneqq Lx$ transforms the integral into
\[
\begin{aligned}
    \int_0^1x^ke^{-Lx}\dd x
    &=
    \frac1{L^{k+1}}\int_0^L y^ke^{-y}\dd y\\
    &=
    \frac{k!}{L^{k+1}}+O_k(e^{-L/2}),
\end{aligned}
\]
where the last estimate follows by bounding the tail over $[L,\infty)$.
Moreover, we have
\[
    \int_0^1x^{N+1}e^{-Lx}\dd x
    =O_N(L^{-N-2}).
\]
This proves \eqref{eq:watson-finite-expansion}. Differentiation under
the integral sign is valid on the finite interval and leads to
\[
    I_f'(L)=-\int_0^1xf(x)e^{-Lx}\dd x.
\]
Apply \eqref{eq:watson-finite-expansion} to $g(x)\coloneqq xf(x)$ through
order $N+1$. Since $g(0)=0$, and for $0\leq k\leq N$ we have
\[
    g^{(k+1)}(0)=(k+1)f^{(k)}(0),
\]
the resulting expansion is exactly
\eqref{eq:watson-differentiated-expansion}.
\end{proof}

\begin{theorem}[Complete displacement asymptotics]\label{thm:asymptotic}
As $n\to\infty$, the displacement $\varepsilon_n=\rho_n-n$ has a
complete asymptotic expansion in inverse powers of $\log n$. Its first
terms are
\begin{equation}\label{eq:displacement-asymptotic}
\begin{aligned}
    \varepsilon_n
    ={}&
    \frac1L
    -\frac{\gamma}{L^2}
    +\frac{\gamma^2}{L^3}
    -\frac{\gamma^3+6\zeta(3)}{L^4}\\
    &+
    \frac{\gamma^4+24\gamma\zeta(3)+7\pi^4/90}{L^5}
    +O(L^{-6}),
\end{aligned}
\end{equation}
where $L\coloneqq\log n$.
\end{theorem}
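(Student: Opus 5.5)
The plan is to read off $\varepsilon_n$ from the phase identity \eqref{eq:phase-at-positive-zero}, $\pi\varepsilon_n=\theta(\rho_n)$, after expanding $\Phi(t)=A(t)+iB(t)$ asymptotically in $L_t\coloneqq\log t$. The first step is to write
\[
\Phi(t)=\int_0^1 w_t(x)e^{i\pi x}\dd x .
\]
By Stirling's formula we have
\[
\frac{\Gamma(t-x)}{\Gamma(t+1)}=t^{-1-x}\bigl(1+O(1/t)\bigr)
\]
uniformly for $x\in[0,1]$. Hence, with $f(x)\coloneqq\Gamma(1+x)e^{i\pi x}$ (a $C^\infty$ function on $[0,1]$), we get
\[
t\,\Phi(t)=I_f(L_t)+O(1/t),
\]
where $I_f$ is as in \Cref{lem:watson-differentiated}, applied separately to real and imaginary parts. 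Then \eqref{eq:watson-finite-expansion} gives, for every $N$,
\[
t\,\Phi(t)=\sum_{k=0}^{N}\frac{f^{(k)}(0)}{L_t^{k+1}}+O(L_t^{-N-2}).
\]
Since $f(0)=1$ and $\operatorname{Im} f'(0)=\pi$, this gives $tA(t)=L_t^{-1}+O(L_t^{-2})$ and $tB(t)=\pi L_t^{-2}+O(L_t^{-3})$. In particular $A(t)>0$ for large $t$, so $\theta(t)=\arctan(B/A)$ is small. The $O(1/t)$ Stirling error is negligible against every power of $1/L_t$.

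Next I would extract the phase as a formal series. Factor out the leading term: $L_t\,t\,\Phi(t)=1+\sum_{k\ge1}f^{(k)}(0)L_t^{-k}+O(L_t^{-N-1})$. Then
\[
\theta(t)=\operatorname{Im}\Log\Bigl(1+\sum_{k\ge1}f^{(k)}(0)L_t^{-k}\Bigr),
\]
which admits a complete expansion in powers of $L_t^{-1}$ whose coefficients are polynomials in the $f^{(k)}(0)$. To compute these concretely, use
\[
\log f(x)=i\pi x-\gamma x+\sum_{k\ge2}\frac{(-1)^k\zeta(k)}{k}x^k .
\]
The derivatives $f^{(k)}(0)$ are then complete Bell polynomials in $i\pi-\gamma,\ \zeta(2),\ -2\zeta(3),\ldots$. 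Expanding through order $L^{-5}$ should give $\pi L^{-1}-\pi\gamma L^{-2}+\cdots$, and after dividing by $\pi$ the stated coefficients; for example, the $\pi^2$ terms coming from $\zeta(2)=\pi^2/6$ and from $(i\pi)^2$ should cancel in the $L^{-3}$ term, leaving $\gamma^2$. This is a routine but careful bookkeeping computation. A cleaner route would be to note that the formal series $\sum_k f^{(k)}(0)L^{-k-1}$ is the asymptotic series of $\int_0^\infty \Gamma(1+x)e^{i\pi x}e^{-Lx}\dd x$, and organize the cancellations by expanding $\log$ of that transform.

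Finally, I would transfer the expansion from $t$ to $n$. Since $\rho_n=n+\varepsilon_n$, we have $\log\rho_n=\log n+O(1/n)$. Differentiating the expansion by means of \eqref{eq:watson-differentiated-expansion}, and using the differentiated Stirling estimate, gives $\theta'(t)=O\bigl(1/(tL_t^2)\bigr)$. Hence $\theta(\rho_n)=\theta(n)+O(1/(nL^2))$, which is smaller than any power of $1/L$. Therefore $\varepsilon_n=\theta(n)/\pi$ has the same complete expansion in $L=\log n$, which yields \eqref{eq:displacement-asymptotic}.

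The main obstacles I anticipate are twofold. First, the error terms must be kept uniform, in particular the Stirling remainder and its $t$-derivative uniformly in $x\in[0,1]$; this is where the one-derivative form of \Cref{lem:watson-differentiated} is needed. Second, the algebra producing the $\zeta(3)$ and $\pi^4$ coefficients must be carried out without error; I would check it against the formal Laplace-transform computation above.
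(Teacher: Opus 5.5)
Your proposal is correct and follows essentially the same route as the paper: the uniform gamma-ratio estimate, the finite-interval Watson expansion of $I(L)=\int_0^1\Gamma(1+x)e^{i\pi x}e^{-Lx}\dd x$, the principal logarithm of $L\,I(L)$ computed from the zeta-series for $\log\Gamma(1+x)$, and evaluation at $t=\rho_n$; the bookkeeping you describe does reproduce the stated coefficients, including the $\pi^2$ cancellation at order $L^{-3}$. The differences are minor. First, the paper passes from $\rho_n$ to $n$ by substituting $\log\rho_n=\log n+O(1/n)$ directly into the uniform expansion, so the differentiated Watson estimate is not needed for this theorem. Second, your suggested check via $\int_0^\infty\Gamma(1+x)e^{i\pi x}e^{-Lx}\dd x$ diverges because $\Gamma(1+x)$ grows superexponentially, so that check should be done formally or on $[0,1]$.
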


\begin{proof}
Equation~(5.11.2) of \cite{NIST} gives
\(\psi(w)=\log w+O(w^{-1})\) on the positive real axis. Uniformly for
\(s\in[-1,1]\), therefore,
\[
    \psi(t+s)
    =
    \log(t+s)+O(t^{-1})
    =
    \log t+O(t^{-1}).
\]
Integrating this estimate from
$s=-x$ to $s=1$ gives
\[
    \log\Gamma(t+1)-\log\Gamma(t-x)
    =(x+1)\log t+O(t^{-1})
\]
uniformly for $0\leq x\leq1$. Exponentiation therefore gives
\begin{equation}\label{eq:gamma-ratio-positive-axis}
    \frac{\Gamma(t-x)}{\Gamma(t+1)}
    =
    t^{-x-1}\left(1+O(t^{-1})\right).
\end{equation}
With $L\coloneqq\log t$ and
\[
    f(x)\coloneqq \Gamma(1+x)e^{i\pi x}
    \quad\text{and}\quad
    I(L)\coloneqq \int_0^1f(x)e^{-Lx}\dd x,
\]
we have the additive estimate
\begin{equation}\label{eq:phase-integral-estimate}
    \Phi(t)=\frac{I(L)}{t}+O\left(\frac1{t^2L}\right).
\end{equation}
For every fixed $N$, \Cref{lem:watson-differentiated} implies
\begin{equation}\label{eq:phase-watson-expansion}
    I(L)
    =
    \frac1L
    \left(
        \sum_{k=0}^{N}\frac{f^{(k)}(0)}{L^k}
        +O_N(L^{-N-1})
    \right).
\end{equation}
In particular, $I(L)=L^{-1}(1+O(L^{-1}))$. Hence
\eqref{eq:phase-integral-estimate} may be written
\[
    \Phi(t)=\frac{I(L)}{t}\left(1+O(t^{-1})\right),
\]
so
\begin{equation}\label{eq:phase-log-reduction}
    \arg\Phi(t)
    =
    \arg I(L)+O(t^{-1}).
\end{equation}
The error is smaller than every fixed inverse power of $L$.

Put $S(L)\coloneqq LI(L)$. By \eqref{eq:phase-watson-expansion},
$S(L)\to f(0)=1$, so the principal logarithm of $S(L)$ is defined for
all sufficiently large $L$. Equations
\eqref{eq:phase-watson-expansion} and \eqref{eq:phase-log-reduction}
therefore imply the arbitrary finite-order relation
\[
    \frac1\pi\arg\Phi(t)
    =
    \frac1\pi\im\log S(L)+O(t^{-1}).
\]
Now \cite[Eq.~(5.7.3)]{NIST} gives the Taylor expansion
\[
    \log f(x)
    =
    (-\gamma+i\pi)x
    +
    \sum_{m=2}^{\infty}
    \frac{(-1)^m\zeta(m)}{m}x^m.
\]
Substituting the derivatives determined by this expansion into
\eqref{eq:phase-watson-expansion}, and then expanding $\log S(L)$,
we obtain
\begin{equation}\label{eq:phase-asymptotic}
\begin{aligned}
    \frac1\pi\arg \Phi(t)
    ={}&
    \frac1L
    -\frac{\gamma}{L^2}
    +\frac{\gamma^2}{L^3}
    -\frac{\gamma^3+6\zeta(3)}{L^4}\\
    &+
    \frac{\gamma^4+24\gamma\zeta(3)+7\pi^4/90}{L^5}
    +O(L^{-6}).
\end{aligned}
\end{equation}
Because $N$ in \eqref{eq:phase-watson-expansion} is arbitrary, the
same logarithmic-composition argument establishes the complete expansion.
At $t=\rho_n$, the left-hand side of \eqref{eq:phase-asymptotic} is
$\varepsilon_n$. Since $\rho_n=n+O(1)$, replacing $\log\rho_n$ by
$\log n$ changes the expansion by less than every fixed inverse power
of $\log n$.
\end{proof}

\begin{corollary}\label{cor:displacements-to-zero}
The displacement sequence $(\varepsilon_n)_{n\geq1}$ satisfies
$\varepsilon_n\to0$.
\end{corollary}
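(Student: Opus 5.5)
This follows immediately from \Cref{thm:asymptotic}. Since $\rho_n\in(n,n+1)$, we have $\rho_n\to\infty$, and hence $L=\log n\to\infty$. The expansion \eqref{eq:displacement-asymptotic} gives $\varepsilon_n=L^{-1}+O(L^{-2})$, so $\varepsilon_n\to0$.

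For a route that does not use the full expansion, we can argue more directly. By \eqref{eq:phase-at-positive-zero}, $\varepsilon_n=\theta(\rho_n)/\pi$. By \eqref{eq:phase-log-reduction}, $\theta(t)=\arg I(\log t)+O(t^{-1})$. Taking $N=0$ in \Cref{lem:watson-differentiated} gives $LI(L)=f(0)+O(L^{-1})=1+O(L^{-1})$, so $\arg I(L)=\arg\bigl(LI(L)\bigr)\to0$. Hence $\theta(t)\to0$ as $t\to\infty$, and therefore $\varepsilon_n\to0$.

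There is a further, essentially soft check. By \Cref{thm:displacements}, the sequence $(\varepsilon_n)$ is strictly decreasing and positive, so it has a limit $\ell\geq0$. Either argument above identifies $\ell=0$.

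The only point needing care is that $\theta$ is the continuous branch of the argument in $(0,\pi)$. This matches the principal argument of $LI(L)$ near $1$, so no multiple of $\pi$ intervenes. Since $B(t)>0$ and $\theta(t)$ is small for large $t$, the two branches agree.
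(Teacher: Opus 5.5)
Your proposal is correct and matches the paper's proof, which reads off the leading term $1/\log n$ from \eqref{eq:displacement-asymptotic} in \Cref{thm:asymptotic}. Your alternative route via \eqref{eq:phase-log-reduction} and the $N=0$ case of \Cref{lem:watson-differentiated} is just the first step of that same argument, so it adds no new ingredient.
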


\begin{proof}
The leading term in \eqref{eq:displacement-asymptotic} is
$1/\log n$.
\end{proof}

\section{Growth and Hadamard factorization}\label{sec:growth}

The zeros of $\Gf$ are now known. We next determine the order, type, and
indicator of $\Gf$, and then record the resulting Hadamard products for $\Gf$.
For an order-one entire function, we use the \defi{Phragm\'en--Lindel\"of
indicator}
\[
    h_f(\theta)\coloneqq
    \limsup_{r\to\infty}r^{-1}\log\abs{f(re^{i\theta})}.
\]
A function of exponential type is in the \defi{Cartwright class} if
\[
    \int_{-\infty}^{\infty}
    \frac{\log^+\abs{f(x)}}{1+x^2}\dd x<\infty,
\]
and the \defi{Laguerre--P\'olya class} is the locally uniform closure of real
polynomials having only real zeros.

We shall also use the classical coefficient asymptotic
\begin{equation}\label{eq:gregory-coefficient-asymptotic}
    \abs{G_n}\sim\frac1{n(\log n)^2}.
\end{equation}
A modern form of the complete asymptotic expansion is given by Nemes
\cite[Thm.~1, p.~2]{Nemes2011}; see Blagouchine \cite[p.~5]{Blagouchine2017} for its
earlier history.

\begin{lemma}\label{lem:sine-bound}
We have the inequality
\begin{equation}\label{eq:sine-bound}
    \abs{\frac{\sin\pi z}{z-m}}
    \leq
    \pi e^{\pi\abs{\im z}}
\end{equation}
for every $z\in\CC$ and every integer $m$, with the quotient
interpreted by continuity at $z=m$.
\end{lemma}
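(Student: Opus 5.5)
We need to bound $\abs{\sin\pi z/(z-m)}\le \pi e^{\pi\abs{\im z}}$. Since $\sin\pi z=(-1)^m\sin\pi(z-m)$, replacing $z$ by $w\coloneqq z-m$ reduces the claim to $m=0$, because the translation by a real integer leaves $\im z$ unchanged. It therefore suffices to prove
\[
    \abs{\frac{\sin\pi w}{w}}\leq \pi e^{\pi\abs{\im w}}
    \quad\text{for all}\quad w\in\CC .
\]

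The plan is to use the integral representation
\[
    \frac{\sin\pi w}{\pi w}=\frac12\int_{-1}^{1}e^{i\pi w s}\dd s ,
\]
which holds for all $w\in\CC$, including $w=0$ where both sides equal $1$; this also handles the interpretation by continuity. For $s\in[-1,1]$ we have $\abs{e^{i\pi ws}}=e^{-\pi s\im w}\le e^{\pi\abs{\im w}}$. Hence
\[
    \abs{\frac{\sin\pi w}{\pi w}}\le \frac12\int_{-1}^{1}e^{\pi\abs{\im w}}\dd s=e^{\pi\abs{\im w}},
\]
and multiplying by $\pi$ gives the claim.

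As an alternative, one could split into cases. For $\abs{w}\ge1$, use $\abs{\sin\pi w}\le\cosh(\pi\im w)\le e^{\pi\abs{\im w}}$. For $\abs{w}<1$, use the power series $\abs{\sin\pi w}\le \sinh(\pi\abs w)\le \pi\abs{w}\cosh(\pi \abs{w})$, which gives a weaker bound with $e^{\pi\abs w}$ in place of $e^{\pi\abs{\im w}}$. The integral route avoids this mismatch, so it is the one I would use.

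There is no real obstacle. The only points to check are the reduction to $m=0$ through the periodicity of sine up to sign, and the validity of the integral formula at $w=0$. Both are immediate.
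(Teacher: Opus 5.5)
Your proof is correct and rests on the same idea as the paper's: after translating to $w=z-m$, the paper writes $\sin(\pi w)/w=\pi\int_0^1\cos(\pi t w)\dd t$, which is your symmetric integral $\frac{\pi}{2}\int_{-1}^{1}e^{i\pi w s}\dd s$ written differently. The only difference is that the paper uses this representation only for $\abs{w}\le 1$ and handles $\abs{w}>1$ separately with the bound $\abs{\sin\pi w}\le e^{\pi\abs{\im w}}$, whereas you correctly observe that the integral estimate holds for all $w$, so the case split is unnecessary.
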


\begin{proof}
Put $w\coloneqq z-m$. If $\abs{w}\leq1$, then
\[
    \frac{\sin\pi w}{w}
    =
    \pi\int_0^1\cos(\pi tw)\dd t,
\]
so the quotient is at most $\pi e^{\pi\abs{\im w}}$ as desired. If $\abs{w}>1$, then
\[
\abs{\frac{\sin\pi w}{w}}\leq \frac{e^{\pi\abs{\im w}}}{\abs{w}} < e^{\pi\abs{\im w}} < \pi e^{\pi\abs{\im w}}.
\]
\end{proof}

\begin{proposition}\label{prop:type-bound}
We have the effective bound
\begin{equation}\label{eq:type-bound}
    \abs{\Gf(z)}\leq 2e^{\pi\abs{\im z}}
    \quad\text{for}\quad z \in \CC.
\end{equation}
Consequently, $\Gf$ has exponential type at most $\pi$ and is bounded
on the real axis.
\end{proposition}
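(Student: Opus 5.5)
We want $|\Gf(z)|\le 2e^{\pi|\im z|}$. Write $\Gf(z)=\frac{\sin\pi z}{\pi z}\Hf(z)$ and insert the Markov formula of \Cref{thm:markov}:
\[
    \Gf(z)=\frac{\sin\pi z}{\pi z}\sum_{n=1}^{\infty}\frac{n\abs{G_n}}{n-z}
    =\sum_{n=1}^{\infty}\frac{n\abs{G_n}}{\pi z}\cdot\frac{\sin\pi z}{n-z},
\]
valid on $\CC\setminus(\NN\cup\{0\})$; both sides are entire after removing singularities, so it suffices to bound the right side there and pass to the excluded points by continuity. The obstacle is the factor $n/z$, which is unbounded in $n$ for fixed $z$. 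To tame it, split
\[
    \frac{n}{z(n-z)}=\frac1{n-z}+\frac1z ,
\]
so that
\[
    \Gf(z)=\frac1\pi\sum_{n=1}^{\infty}\abs{G_n}\frac{\sin\pi z}{n-z}+\frac{\sin\pi z}{\pi z}\sum_{n=1}^{\infty}\abs{G_n}.
\]
Both series converge absolutely, since $\sum\abs{G_n}=1$ by \eqref{eq:abs-Gn-sums-to-1}. Rearranging is therefore harmless, and the second sum equals $\frac{\sin\pi z}{\pi z}$.

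Now apply \Cref{lem:sine-bound}. With $m=n$, each term of the first sum satisfies
\[
    \abs{\frac{\sin\pi z}{n-z}}\le \pi e^{\pi\abs{\im z}},
\]
so the first piece is at most $\sum\abs{G_n}e^{\pi|\im z|}=e^{\pi|\im z|}$. With $m=0$, the second piece satisfies $\abs{\sin\pi z/(\pi z)}\le e^{\pi|\im z|}$. Adding the two gives $\abs{\Gf(z)}\le 2e^{\pi\abs{\im z}}$ off the integers. Continuity of the entire function $\Gf$, and the fact that each bound used is continuous in $z$, then extends it to all $z$. (Alternatively, the decomposition is itself an identity of entire functions once each term is read by continuity.)

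The consequences follow at once. The bound gives $\log\abs{\Gf(re^{i\theta})}\le \pi r\abs{\sin\theta}+\log 2$, so $\Gf$ has exponential type at most $\pi$. Setting $\im z=0$ gives $\abs{\Gf(x)}\le2$ on the real axis.

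The only delicate step is the partial-fraction splitting, which replaces the divergent-looking weight $n/z$ by summable weights. Everything else is a direct application of \Cref{lem:sine-bound} and $\sum\abs{G_n}=1$.
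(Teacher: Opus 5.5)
Your proof is correct, but it takes a different route from the paper's. The paper keeps the weight $n/(n-z)$ intact and splits the Markov series at $n=2\abs{z}$. For $n\le 2\abs{z}$ it applies \Cref{lem:sine-bound} with $m=n$ and absorbs the factor $n/\abs{z}\le 2$. For $n>2\abs{z}$ it uses $\abs{n/(n-z)}\le 2$ together with the $m=0$ case. Either way each summand is at most $2e^{\pi\abs{\im z}}\abs{G_n}$.

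Your partial-fraction step instead gives an exact identity,
\[
    \Gf(z)=\frac{\sin\pi z}{\pi z}+\frac1\pi\sum_{n=1}^{\infty}\abs{G_n}\frac{\sin\pi z}{n-z}.
\]
Its terms are entire and, by the same lemma, dominated by $e^{\pi\abs{\im z}}\abs{G_n}$. So the series converges uniformly on horizontal strips, the identity holds on all of $\CC$, and no $z$-dependent cutoff or case distinction is needed. Since $G_n=(-1)^{n-1}\abs{G_n}$ and $\sin\pi(z-n)=(-1)^n\sin\pi z$, this is exactly the cardinal series
\[
    \Gf(z)=\sum_{n\geq0}G_n\,\frac{\sin\pi(z-n)}{\pi(z-n)}.
\]
The samples at negative integers vanish. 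Your constant $2$ is then just $\sum_{n\geq0}\abs{G_n}=1+1$.

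So your route gives a shorter argument and an interpolation formula that explains where the constant comes from. The paper's route bounds the original Markov series term by term, at the cost of the two-case split.
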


\begin{proof}
We first take $z\notin\NN_0$; the estimate at nonnegative integers then
follows by removable continuation. Equation \eqref{eq:markov} implies
\[
    \Gf(z)
    =
    \frac{\sin\pi z}{\pi z}
    \sum_{n=1}^{\infty}\frac{n\abs{G_n}}{n-z}.
\]
If $n\leq2\abs{z}$, then \Cref{lem:sine-bound}, applied with $m=n$,
shows
\begin{equation}\label{eq:type-bound-summand}
    \abs{
    \frac{\sin\pi z}{\pi z}\frac{n}{n-z}
    }
    \leq
    \frac{n}{\abs{z}}e^{\pi\abs{\im z}}
    \leq
    2e^{\pi\abs{\im z}}.
\end{equation}
If $n>2\abs{z}$, then $\abs{\frac{n}{n-z}}\leq2$, and applying
\Cref{lem:sine-bound} with $m=0$, we obtain
\[
    \abs{\frac{\sin\pi z}{\pi z}}
    \leq e^{\pi\abs{\im z}}.
\]
Thus \eqref{eq:type-bound-summand} holds in this case as well. Since
$\sum_{n\geq1}\abs{G_n}=1$, summing against $\abs{G_n}$ proves
\eqref{eq:type-bound}.
\end{proof}

Throughout this section, $\Log$ denotes the \defi{principal branch} of the natural logarithm.

\begin{proposition}[Sectorial asymptotics]\label{prop:sector}
Uniformly on closed subsectors of the upper half-plane, we have
\begin{equation}\label{eq:sector-upper}
    \Gf(z)
    =
    \frac{i e^{-i\pi z}}
         {2\pi z(\Log z-i\pi)}
    \left(1+O\left(\frac1{\log\abs{z}}\right)\right).
\end{equation}
Uniformly on closed subsectors of the lower half-plane, we have
\begin{equation}\label{eq:sector-lower}
    \Gf(z)
    =
    \frac{e^{i\pi z}}
         {2\pi i z(\Log z+i\pi)}
    \left(1+O\left(\frac1{\log\abs{z}}\right)\right).
\end{equation}
\end{proposition}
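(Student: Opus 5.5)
The plan is to start from the defining integral \eqref{eqn:gregory-function} and use Euler's reflection formula on the factor $1/\Gamma(x-z+1)$. The resulting integrand contains a gamma ratio that is slowly varying in the sector and an oscillatory factor $\sin\pi(z-x)$. One exponential in that factor dominates in the upper half-plane and the other is negligible. The lower half-plane then follows by symmetry, because $\Gf$ is real entire and so $\Gf(\bar z)=\overline{\Gf(z)}$. Conjugating \eqref{eq:sector-upper} and noting $\overline{\Log z}=\Log\bar z$ off the negative axis gives \eqref{eq:sector-lower}.

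\emph{Step 1: reduction.} Reflection gives $1/\Gamma(x-z+1)=\pi^{-1}\Gamma(z-x)\sin\pi(z-x)$. Hence
\[
    \Gf(z)=\frac1\pi\int_0^1\Gamma(1+x)\frac{\Gamma(z-x)}{\Gamma(z+1)}\sin\pi(z-x)\dd x .
\]
Write $\sin\pi(z-x)=\bigl(e^{i\pi(z-x)}-e^{-i\pi(z-x)}\bigr)/(2i)$. This splits $\Gf$ into two pieces. The main piece is
\[
    \frac{i e^{-i\pi z}}{2\pi}\int_0^1\Gamma(1+x)e^{i\pi x}\frac{\Gamma(z-x)}{\Gamma(z+1)}\dd x ,
\]
and the second piece carries $e^{i\pi z}$. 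On a closed subsector $\delta\le\arg z\le\pi-\delta$ we have $\im z\ge|z|\sin\delta$.

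\emph{Step 2: gamma ratio.} Next I will establish the complex analogue of \eqref{eq:gamma-ratio-positive-axis}:
\[
    \frac{\Gamma(z-x)}{\Gamma(z+1)}=z^{-x-1}\bigl(1+O(|z|^{-1})\bigr)
\]
uniformly for $0\le x\le1$ and $z$ in the closed subsector with $|z|\ge2$. To get this, apply Stirling's formula for $\Log\Gamma$ at $z-x$ and at $z+1$. Both points stay in a sector $|\arg w|\le\pi-\delta/2$ for large $|z|$, where Stirling holds uniformly. Then expand $(z-x-\tfrac12)\Log(z-x)-(z+\tfrac12)\Log(z+1)$ to first order.

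With this estimate, the main integral becomes $z^{-1}\bigl(I(\lambda)+O(|z|^{-1})\bigr)$, where
\[
    I(\lambda)\coloneqq\int_0^1 f(x)e^{-\lambda x}\dd x,\qquad f(x)\coloneqq\Gamma(1+x),\qquad \lambda\coloneqq\Log z-i\pi .
\]
Here $e^{i\pi x}z^{-x}=e^{-\lambda x}$, and $\re\lambda=\log|z|$.

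\emph{Step 3: complex Watson step.} This is the main obstacle, because \Cref{lem:watson-differentiated} is only stated for real $L$. I will handle it directly with one integration by parts:
\[
    I(\lambda)=\frac{f(0)-f(1)e^{-\lambda}}{\lambda}+\frac1\lambda\int_0^1 f'(x)e^{-\lambda x}\dd x .
\]
The last integral is bounded in modulus by $\|f'\|_\infty/\re\lambda$. Also $|e^{-\lambda}|=|z|^{-1}$ and $|\lambda|\ge\log|z|$. Since $f(0)=1$, this gives $I(\lambda)=\lambda^{-1}\bigl(1+O(1/\log|z|)\bigr)$. The $O(|z|^{-1})$ error from Step 2 is absorbed, since $|\lambda|\le\log|z|+2\pi$. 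So the main piece equals
\[
    \frac{ie^{-i\pi z}}{2\pi z(\Log z-i\pi)}\bigl(1+O(1/\log|z|)\bigr).
\]

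\emph{Step 4: the subdominant piece.} By Step 2, the $e^{i\pi z}$ piece is $O\bigl(e^{-\pi\im z}|z|^{-1}\bigr)$. Relative to the main term, whose modulus is $\asymp e^{\pi\im z}/(|z|\log|z|)$, this is $O\bigl(e^{-2\pi|z|\sin\delta}\log|z|\bigr)$. That is far smaller than $1/\log|z|$, which completes \eqref{eq:sector-upper}.
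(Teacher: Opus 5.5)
Your proposal is correct and follows essentially the same route as the paper: reflection, splitting $\sin\pi(z-x)$ into its two exponentials, the uniform gamma-ratio estimate $\Gamma(z-x)/\Gamma(z+1)=z^{-x-1}\bigl(1+O(\abs{z}^{-1})\bigr)$, one integration by parts for $\int_0^1\Gamma(1+x)e^{-\lambda x}\dd x$ with $\lambda=\Log z-i\pi$, and discarding the exponentially subdominant piece. The differences are minor. You derive the gamma ratio from Stirling's formula for $\Log\Gamma$, whereas the paper integrates $\psi(w)=\Log w+O(\abs{w}^{-1})$. You obtain the lower half-plane from $\Gf(\bar z)=\overline{\Gf(z)}$ rather than repeating the estimate with $e^{-i\pi x}$. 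You also bound the subdominant term explicitly, where the paper only asserts exponential dominance.
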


\begin{proof}
Euler's reflection formula gives
\[
    \Gf(z)
    =
    \frac1\pi
    \int_0^1
    \Gamma(1+x)
    \frac{\Gamma(z-x)}{\Gamma(z+1)}
    \sin\pi(z-x)
    \dd x.
\]
Writing the sine as a difference of exponentials yields
\[
    \Gf(z)
    =
    \frac1{2\pi i}
    \left(e^{i\pi z}J_-(z)-e^{-i\pi z}J_+(z)\right),
\]
where
\[
    J_\pm(z)
    \coloneqq 
    \int_0^1
    \Gamma(1+x)
    \frac{\Gamma(z-x)}{\Gamma(z+1)}
    e^{\pm i\pi x}
    \dd x.
\]
For fixed shifts, Olver derives the standard sectorial ratio
expansion in \cite[Ch.~4, \S5.1, Eq.~(5.02)]{Olver1974}. Here we need
the following compact-parameter version:
\begin{equation}\label{eq:gamma-ratio-sector}
    \frac{\Gamma(z-x)}{\Gamma(z+1)}
    =
    z^{-x-1}\left(1+O(\abs{z}^{-1})\right),
\end{equation}
uniformly for $0\leq x\leq1$. On every closed sector under consideration,
\cite[Eq.~(5.11.2)]{NIST} gives
\(\psi(w)=\Log w+O(\abs{w}^{-1})\). Uniformly for
\(s\in[-1,1]\), therefore,
\[
    \psi(z+s)
    =
    \Log(z+s)+O(\abs{z}^{-1})
    =
    \Log z+O(\abs{z}^{-1}).
\]
Integrating from $s=-x$ to $s=1$ and exponentiating proves
\eqref{eq:gamma-ratio-sector}, uniformly for $0\leq x\leq1$.
Put $\lambda_\pm\coloneqq\Log z\mp i\pi$. Integration by parts on the finite
interval gives
\[
    \int_0^1 \Gamma(1+x)e^{-\lambda_\pm x}\dd x
    =
    \frac1{\lambda_\pm}
    +O\left(\frac1{\abs{\lambda_\pm}^2}
        +\frac{e^{-\re\lambda_\pm}}{\abs{\lambda_\pm}}\right),
\]
uniformly on the closed subsectors under consideration. Combining this
finite-interval estimate with \eqref{eq:gamma-ratio-sector} implies
\[
    J_\pm(z)
    =
    \frac{1}{z(\Log z\mp i\pi)}
    \left(1+O\left(\frac1{\log\abs{z}}\right)\right).
\]
In the upper half-plane, the term containing $e^{-i\pi z}$ dominates
exponentially. In the lower half-plane, the term containing
$e^{i\pi z}$ dominates. This proves \eqref{eq:sector-upper} and
\eqref{eq:sector-lower}.
\end{proof}

\begin{theorem}[Growth and Cartwright class]\label{thm:cartwright}
The Gregory function has order $1$, exponential type $\pi$, and
indicator
\[
    h_{\Gf}(\theta)=\pi\abs{\sin\theta}.
\]
Moreover, $\Gf$ belongs to the Cartwright class.
\end{theorem}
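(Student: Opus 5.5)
The plan is to combine the effective bound of \Cref{prop:type-bound} with the sectorial asymptotics of \Cref{prop:sector}. The upper bounds come first. By \eqref{eq:type-bound}, $\log\abs{\Gf(re^{i\theta})}\leq \log 2+\pi r\abs{\sin\theta}$. Hence $h_{\Gf}(\theta)\leq\pi\abs{\sin\theta}$ for every $\theta$, $\Gf$ has order at most $1$, and its type is at most $\pi$. The Cartwright condition is immediate from the same bound: $\Gf$ is bounded on $\RR$ by $2$, so $\log^+\abs{\Gf(x)}\leq\log 2$ and the integral against $(1+x^2)^{-1}$ converges.

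For the lower bound on the indicator, fix $\theta\in(0,\pi)$ and apply \eqref{eq:sector-upper} on a closed subsector containing the ray $\arg z=\theta$. With $z=re^{i\theta}$ we have $\abs{e^{-i\pi z}}=e^{\pi r\sin\theta}$. The prefactor has modulus
\[
\frac{1}{2\pi r\abs{\Log z-i\pi}}\asymp\frac{1}{r\log r},
\]
and the correction factor $1+O(1/\log r)$ is bounded away from $0$ for large $r$. Therefore
\[
\log\abs{\Gf(re^{i\theta})}=\pi r\sin\theta-\log r-\log\log r+O(1),
\]
which gives $h_{\Gf}(\theta)=\pi\sin\theta$. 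The lower half-plane is handled identically via \eqref{eq:sector-lower}, or by the real symmetry $\Gf(\bar z)=\overline{\Gf(z)}$, and gives $h_{\Gf}(\theta)=\pi\abs{\sin\theta}$ for $\theta\in(-\pi,0)$.

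At $\theta=0,\pi$ the sector estimates do not apply, but indicators of exponential-type functions are continuous (they are trigonometrically convex). So $h_{\Gf}(0)=h_{\Gf}(\pi)=0$ follows by continuity. Alternatively, it follows directly: boundedness gives $h\leq0$ on the real axis, and trigonometric convexity together with $h(\pm\pi/2)=\pi$ forces $h(0)\geq0$.

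Since $h_{\Gf}(\pi/2)=\pi>0$, $\Gf$ is not of order less than $1$, nor of order $1$ and minimal type. Hence the order is exactly $1$ and the type is $\max_\theta h_{\Gf}(\theta)=\pi$. One could also confirm order $1$ from \Cref{cor:counting}, since $n_{\Gf}(r)\sim 2r$ forces order at least $1$.

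The main obstacle is making sure the $O(1/\log\abs{z})$ error in \Cref{prop:sector} really lets us take logarithms along a fixed ray. The only point needing care is that the correction factor stays nonzero and bounded for large $r$, which it does since the error tends to $0$ uniformly on the closed subsector. The endpoint directions $\theta=0,\pi$ are the other delicate point, and I would handle them through continuity of the indicator rather than through any asymptotic on the real axis.
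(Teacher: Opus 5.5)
Your proposal is correct. It agrees with the paper on three points: the upper bounds come from \eqref{eq:type-bound}, the off-axis directions come from \Cref{prop:sector}, and the Cartwright condition comes from boundedness on $\RR$.

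The real difference is at $\theta=0$ and $\theta=\pi$. The paper computes these two indicator values from explicit real-axis information:
\begin{itemize}
\item On the positive axis it uses $\Gf(n)=G_n$ together with the classical asymptotic $\abs{G_n}\sim 1/(n(\log n)^2)$ to obtain $h_{\Gf}(0)\geq 0$.
\item On the negative axis it uses the reflection formula and a Watson-type estimate at half-integers to show $\abs{\Gf(-r)}\asymp 1/(r\log r)$, which gives $h_{\Gf}(\pi)\geq 0$.
\end{itemize}
You instead invoke the structural theorem that the indicator of an entire function of exponential type is trigonometrically convex, hence continuous. The endpoint values then follow from the off-axis formula by taking a limit.

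Your route is shorter. It removes any dependence on the coefficient asymptotic \eqref{eq:gregory-coefficient-asymptotic} and on the negative-axis estimate. The paper's route is self-contained apart from Watson's lemma and general facts about indicators. It also yields quantitative decay of $\Gf$ along both real half-axes, which is more information than the indicator records.

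One small correction to your alternative argument. The values $h_{\Gf}(\pm\pi/2)=\pi$ alone do not force $h_{\Gf}(0)\geq 0$. For example, the indicator of $e^{-Mz}\sin\pi z$ is $-M\cos\theta+\pi\abs{\sin\theta}$, which equals $\pi$ at $\pm\pi/2$ but $-M$ at $0$. Either of the following repairs works:
\begin{itemize}
\item Apply the three-point trigonometric-convexity inequality on $[0,\pi/2]$ using the full off-axis formula $h_{\Gf}(\beta)=\pi\sin\beta$. This gives $h_{\Gf}(0)\cos\beta\geq 0$.
\item Use the antipodal inequality $h_{\Gf}(\theta)+h_{\Gf}(\theta+\pi)\geq 0$ together with $h_{\Gf}(0)\leq 0$ and $h_{\Gf}(\pi)\leq 0$ from boundedness on $\RR$. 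This forces both endpoint values to vanish without using the off-axis asymptotics at all.
\end{itemize}
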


\begin{proof}
For $0<\abs{\theta}<\pi$, \eqref{eq:sector-upper} and
\eqref{eq:sector-lower} show
\[
    \log\abs{\Gf(re^{i\theta})}
    =
    \pi r\abs{\sin\theta}+O(\log r).
\]
Thus the asserted indicator holds away from the real axis.

On the positive real axis, \eqref{eq:gregory-coefficient-asymptotic}
and the values $\Gf(n)=G_n$ imply $h_{\Gf}(0)\geq0$.
The bound \eqref{eq:type-bound} proves the reverse inequality. Hence $h_{\Gf}(0)=0$.

On the negative real axis, Euler's reflection formula gives
\[
    \Gf(-r)
    =
    \frac{\Gamma(r)\sin\pi r}{\pi}
    \int_0^1
    \frac{\Gamma(1+x)}{\Gamma(r+x+1)}
    \dd x.
\]
For positive half-integers $r$, the corresponding compact-shift
estimate on the positive axis gives
\[
    \frac{\Gamma(r)}{\Gamma(r+x+1)}
    =
    r^{-x-1}\left(1+O(r^{-1})\right)
\]
uniformly for \(0\leq x\leq1\). Watson's lemma therefore gives
\begin{equation}\label{eq:negative-axis-half-integer-estimate}
    \abs{\Gf(-r)}\asymp\frac1{r\log r}
\end{equation}
as $r \to \infty$. Together, \eqref{eq:type-bound} and
\eqref{eq:negative-axis-half-integer-estimate} imply
$h_{\Gf}(\pi)=0$.

The indicator is therefore $\pi\abs{\sin\theta}$ in every direction. In
particular, $\Gf$ has order $1$ and type $\pi$. Finally,
The bound \eqref{eq:type-bound} shows that $\Gf$ is bounded on
$\RR$. Thus
\[
    \int_{-\infty}^{\infty}
    \frac{\log^+\abs{\Gf(x)}}{1+x^2}\dd x<\infty,
\]
which is the Cartwright condition.
\end{proof}

\begin{corollary}[Hadamard product]\label{cor:hadamard}
Let $\Eone(w)\coloneqq(1-w)e^w$. Then
\[
    \Gf(z)
    =
    e^{\gamma z}
    \prod_{\rho\in Z(\Gf)}\Eone\left(\frac{z}{\rho}\right).
\]
The product converges locally uniformly and is independent of the
ordering of the zeros.
\end{corollary}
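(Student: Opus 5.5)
Since \Cref{thm:cartwright} shows that $\Gf$ has order $1$, Hadamard's factorization theorem applies with genus at most $1$. We therefore aim to write
\[
    \Gf(z)=e^{a+bz}\prod_{\rho\in Z(\Gf)}\Eone\!\left(\frac{z}{\rho}\right).
\]
For this we need $\sum_{\rho}\abs{\rho}^{-2}<\infty$, which follows from \Cref{thm:zeros}: the zeros are $-1,-2,\ldots$ together with $\rho_n\in(n,n+1)$, and $n_{\Gf}(r)=2r+O(1)$ by \Cref{cor:counting}. The same bound makes the product converge absolutely and locally uniformly. This absolute convergence is also what makes the product independent of the ordering, since $\abs{\Eone(w)-1}=O(\abs{w}^2)$ uniformly on compact sets. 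Because $0$ is not a zero ($\Gf(0)=1$), there is no factor $z^m$.

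It remains to identify the constants $a$ and $b$. Setting $z=0$ gives $e^{a}=\Gf(0)=1$ by \Cref{prop:basic}, so we may take $a=0$. Next we take the logarithmic derivative at $0$. Each factor satisfies
\[
    \frac{\dd}{\dd z}\log\Eone(z/\rho)=-\frac{1}{\rho-z}+\frac1\rho=-\frac{z}{\rho(\rho-z)},
\]
which vanishes at $z=0$. The series of these derivatives converges locally uniformly away from the zeros, by the same $\sum\abs{\rho}^{-2}$ bound. Hence $\Gf'(0)/\Gf(0)=b$, and \Cref{prop:basic} gives $b=\gamma$.

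The only step needing care is the justification that the Hadamard genus is at most $1$ and that the logarithmic derivative may be taken termwise. Both follow from order $1$ and convergence of $\sum\abs\rho^{-2}$; I see no genuine obstacle. The exponent of convergence is exactly $1$, since $\sum 1/\abs{\rho}$ diverges, so genus-one factors are genuinely required. That is why the product is stated with $\Eone$ rather than with plain linear factors.
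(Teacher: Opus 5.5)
Your proposal is correct and follows the paper's proof essentially step for step: order one together with the zero count $n_{\Gf}(r)=2r+O(1)$ puts you in Hadamard's theorem with genus-one primary factors, $\Gf(0)=1$ removes the constant, and the vanishing logarithmic derivative of each $\Eone(z/\rho)$ at the origin identifies the linear coefficient as $\Gf'(0)=\gamma$. Your justification of locally uniform, ordering-independent convergence via $\sum_\rho\abs{\rho}^{-2}<\infty$ matches the paper's closing line.
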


\begin{proof}
By \eqref{eq:zero-counting-asymptotic}, the exponent of convergence of
the zeros is $1$. Hadamard's factorization theorem
\cite[Ch.~5, \S3.2, Thm.~8, pp.~208--212]{Ahlfors} therefore implies
\[
    \Gf(z)
    =
    e^{az+b}
    \prod_{\rho\in Z(\Gf)}\Eone\left(\frac{z}{\rho}\right).
\]
Since $\Gf(0)=1$, we may take $b=0$. The logarithmic derivative of
every primary factor vanishes at the origin, so
\[
    a=\frac{\Gf'(0)}{\Gf(0)}=\gamma.
\]
Normal convergence follows from $\sum_{\rho}\abs{\rho}^{-2}<\infty$.
\end{proof}

\section{A spectral model}\label{sec:spectral}

The Markov transform \eqref{eq:markov} also leads to a direct spectral model. The positive
zeros appear as eigenvalues of a codimension-one compression of the
diagonal operator with eigenvalues $1/n$.

Consider $\ell^2(\NN)$ with its standard orthonormal basis $(e_n)_{n\geq1}$.
Define
\[
    Te_n\coloneqq\frac1n e_n
    \quad\text{and}\quad
    u\coloneqq \sum_{n=1}^{\infty}\sqrt{\abs{G_n}}\,e_n.
\]
The vector $u$ is a unit vector by \eqref{eq:abs-Gn-sums-to-1}. Let
\[
    Q\coloneqq u\otimes u,
    \qquad
    P\coloneqq I-Q,
    \qquad
    \Tperp\coloneqq PTP.
\]
Thus $\Tperp$ is the \defi{compression} of $T$ to $u^\perp$, extended by zero
on $\CC u$.

\begin{proposition}[Spectral compression model]\label{prop:resolvent}
For $z\notin\NN$, we have
\begin{equation}\label{eq:resolvent-identity}
    \Hf(z)
    =
    \left\langle(I-zT)^{-1}u,u\right\rangle.
\end{equation}
The nonzero spectrum of $\Tperp$ is
\[
    \sigma(\Tperp)\setminus\{0\}
    =
    \left\{\frac1{\rho_n}:n\geq1\right\},
\]
and every nonzero eigenvalue is simple.
\end{proposition}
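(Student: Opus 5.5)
The resolvent identity is immediate from diagonalization. For $z\notin\NN$, the operator $I-zT$ is diagonal with entries $1-z/n\neq0$. These entries tend to $1$, so $I-zT$ is boundedly invertible, with $(I-zT)^{-1}e_n=(1-z/n)^{-1}e_n$. Pairing with $u$ gives
\[
\left\langle(I-zT)^{-1}u,u\right\rangle=\sum_{n\geq1}\frac{\abs{G_n}}{1-z/n},
\]
and this is $\Hf(z)$ by \Cref{thm:markov}.

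For the spectrum, I will work with the self-adjoint compact operator $\Tperp$; it is compact because $T$ is. Its nonzero spectrum therefore consists of eigenvalues of finite multiplicity. Since $\Tperp u=0$, any eigenvector with nonzero eigenvalue $\lambda$ lies in $u^\perp$. Take $v\perp u$ with $PTv=\lambda v$. This is equivalent to $Tv-\lambda v=cu$ with $c=\langle Tv,u\rangle$. Write $\lambda=1/z$; nonzero real eigenvalues lie in $(0,1]$ since $0\leq T\leq I$. There are two cases.

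\emph{Case $z\notin\NN$.} Here $T-\lambda I$ is invertible, and solving gives
\[
v=c(T-\lambda)^{-1}u=-cz(I-zT)^{-1}u.
\]
If $c=0$ then $v=0$, so $c\neq0$, and then the constraint $\langle v,u\rangle=0$ reads $\Hf(z)=0$. Conversely, if $\Hf(z)=0$, then $v\coloneqq(I-zT)^{-1}u$ is nonzero, lies in $u^\perp$, and satisfies the eigen-equation. The eigenspace is one-dimensional, because $v$ is determined by the single scalar $c$. By \Cref{thm:zeros}, the zeros of $\Hf$ are exactly the $\rho_n$, which gives the eigenvalues $1/\rho_n$, each simple.

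\emph{Case $z=m\in\NN$.} Take the $m$th coordinate of $Tv-\lambda v=cu$. It gives $0=c\sqrt{\abs{G_m}}$, so $c=0$ because $G_m\neq0$. Then $Tv=\lambda v$ forces $v$ to be a multiple of $e_m$. But $\langle e_m,u\rangle=\sqrt{\abs{G_m}}\neq0$, contradicting $v\perp u$. So $1/m$ is not an eigenvalue.

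Finally, $0$ is in the spectrum since $\Tperp$ is compact on an infinite-dimensional space. This does not affect the statement, which concerns only the nonzero spectrum. The main step requiring care is the integer case, where nonvanishing of every $G_m$ (\Cref{prop:basic}) is exactly what excludes the eigenvalues $1/m$. Simplicity follows from the one-parameter description of the eigenvectors.
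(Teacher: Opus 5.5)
Your proof is correct and follows essentially the same route as the paper. You reduce $\Tperp v=\lambda v$ with $v\perp u$ to $(T-\lambda I)v=cu$, solve through the resolvent when $\lambda\notin\{1/n\}$ to get $\Hf(1/\lambda)=0$, and exclude $\lambda=1/m$ via the $m$th coordinate and $G_m\neq0$. Your explicit use of compactness and self-adjointness of $\Tperp$ to identify the nonzero spectrum with eigenvalues, and your one-parameter argument for simplicity, spell out two steps the paper leaves implicit.
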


\begin{proof}
The resolvent identity \eqref{eq:resolvent-identity} follows at once from \eqref{eq:markov}:
\[
    \left\langle(I-zT)^{-1}u,u\right\rangle
    =
    \sum_{n=1}^{\infty}\frac{\abs{G_n}}{1-z/n}
    =
    \Hf(z).
\]

Suppose that $\Tperp x=\lambda x$, where $\lambda\neq0$. Then $x\perp u$
and
\[
    (T-\lambda I)x=cu
\]
for some scalar $c$. If
$\lambda\notin\{1,1/2,1/3,\ldots\}$, then
\[
    x=c(T-\lambda I)^{-1}u.
\]
The condition $x\perp u$ is equivalent to
\[
    \left\langle(T-\lambda I)^{-1}u,u\right\rangle=0,
\]
or $\Hf(1/\lambda)=0$.

No number $1/n$ is an eigenvalue of $\Tperp$. Indeed, the $n$th coordinate
of $(T-n^{-1}I)x=cu$ forces $c=0$, after which $x$ is a multiple of
$e_n$; but $e_n\notin u^\perp$. Conversely, every zero of
$\Hf(1/\lambda)$ produces an eigenvector. \Cref{thm:zeros}
completes the proof.
\end{proof}

Thus
\[
    \frac1n>\frac1{\rho_n}>\frac1{n+1}
\]
is the eigenvalue interlacing for this codimension-one compression.

\section{Relative determinants and trace identities}\label{sec:determinants}

The compression differs from the original diagonal operator by a
finite-rank perturbation in the following relative form:
\[
    T-\Tperp=QT+TQ-QTQ.
\]
For $z\notin\NN$, we therefore have
\[
    (I-z\Tperp)(I-zT)^{-1}
    =I+z(T-\Tperp)(I-zT)^{-1},
\]
which differs from the identity by a finite-rank operator. Thus the
perturbation determinant in \Cref{def:relative-determinant} is defined.

\begin{definition}\label{def:relative-determinant}
The \defi{relative Fredholm determinant} of the pair $(\Tperp,T)$ is
\[
    D(z)
    \coloneqq 
    \det\left((I-z\Tperp)(I-zT)^{-1}\right).
\]
It is relative because it compares the resolvents of the compression
$\Tperp$ and the original diagonal operator $T$, rather than forming an
absolute determinant of either infinite-dimensional operator.
\end{definition}

\begin{theorem}[Relative determinant formula]\label{thm:determinant}
We have
\begin{equation}\label{eq:relative-determinant-identity}
    D(z)=\Hf(z)
\end{equation}
and
\begin{equation}\label{eq:relative-determinant-product}
    \Hf(z)
    =
    \prod_{n=1}^{\infty}
    \frac{1-z/\rho_n}{1-z/n}.
\end{equation}
The product converges locally uniformly away from its poles.
\end{theorem}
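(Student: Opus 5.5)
The plan has two parts: identify $D(z)$ with $\Hf(z)$ by a rank-one determinant computation, then identify it with the product in \eqref{eq:relative-determinant-product} by comparing zeros, poles, and growth.

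\textbf{Step 1: $D=\Hf$.} Write $R(z)\coloneqq(I-zT)^{-1}$ for $z\notin\NN$, and use the block structure of $\Tperp$. On $u^\perp$ we have $I-z\Tperp=P(I-zT)P$, and on $\CC u$ the operator $I-z\Tperp$ acts as the identity. The classical Schur-complement (Krein) formula for a codimension-one compression should give
\[
    \det\bigl((I-z\Tperp)R(z)\bigr)=\langle R(z)u,u\rangle .
\]
To verify it concretely, write $(I-z\Tperp)R(z)=I+zK(z)$ with $K(z)=(T-\Tperp)R(z)$ of rank at most $2$. I would compute $\det(I+zK)$ as a $2\times2$ determinant in the basis spanned by $u$ and $Tu$. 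An alternative route avoids this calculation. $D$ is holomorphic on $\CC\setminus\NN$ with $D(0)=1$, so it suffices to match logarithmic derivatives:
\[
    -\frac{D'}{D}=\Tr\bigl(\Tperp(I-z\Tperp)^{-1}-TR\bigr).
\]
I would then evaluate this trace via the resolvent of the compression, $P(I-z\Tperp)^{-1}P=PR P-\frac{PRu\otimes PR^*u}{\langle Ru,u\rangle}$ restricted to $u^\perp$. This yields $\Hf'/\Hf$ after the identity $\frac{d}{dz}\langle Ru,u\rangle=\langle TR^2u,u\rangle$.

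\textbf{Step 2: the product formula.} The quotient $F(z)\coloneqq \Hf(z)\prod_n(1-z/n)/(1-z/\rho_n)$ is handled through the paired product in Theorem~\ref{thm:main}(4). Equivalently, $\Hf=\frac{\pi z}{\sin\pi z}\Gf$ together with $\frac{\sin \pi z}{\pi z}=\prod(1-z^2/n^2)$ reduces \eqref{eq:relative-determinant-product} to
\[
    \Gf(z)=\prod_{n}\left(1+\frac zn\right)\left(1-\frac z{\rho_n}\right).
\]
This follows from Corollary~\ref{cor:hadamard}: group the factors $E_1(-z/n)E_1(z/\rho_n)$ in pairs. The leftover exponential is $\exp\bigl(z(\gamma-\sum_n(1/n-1/\rho_n))\bigr)$, where the sum converges because $1/n-1/\rho_n=O(\varepsilon_n/n^2)$.

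\textbf{Main obstacle.} The key difficulty is showing that this leftover exponent vanishes, i.e.\ that $\gamma=\sum_n(1/n-1/\rho_n)$. This is really what Theorem~\ref{thm:main}(5) asserts, so it cannot be assumed. I would instead prove it independently via the growth of $F$. The function $F$ is entire, zero-free, of order at most $1$, and hence of the form $e^{cz}$. Along the imaginary axis, Proposition~\ref{prop:sector} gives $\abs{\Hf(iy)}\asymp 1/\log y$. The ratio of products, taken as Blaschke-like factors symmetric in $\pm iy$, has modulus growth at most $e^{o(y)}$ in both directions. Since $e^{ciy}$ has modulus one, this alone does not determine $c$. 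I would therefore use the Pick property of Corollary~\ref{cor:pick} instead. By \eqref{eq:markov}, $\Hf(iy)\to0$ with argument approaching $0$ appropriately, and the argument of $e^{ciy}$ must stay bounded. Boundedness of $\arg\Hf(iy)$ in $(0,\pi)$ combined with $\arg$ of the product staying bounded (estimated via $\sum\arctan(y/n)-\arctan(y/\rho_n)=O(\sum y\varepsilon_n/(n^2+y^2))=o(1)$) then forces $c=0$.

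Local uniform convergence away from the poles follows from $\sum\abs{1/n-1/\rho_n}<\infty$.
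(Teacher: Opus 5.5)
Your proposal is correct. Step~1 is essentially the paper's Schur-complement computation, but Step~2 takes a different route.

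The paper gets the form $D(z)=e^{cz}P(z)$, with $P(z)=\prod_n(1-z/\rho_n)/(1-z/n)$, from Simon's multiplicativity and $\det_2$ eigenvalue-product formulas. It then kills $c$ using \emph{modulus} bounds on the negative real axis: the estimates $\frac{1}{2(1+r)}\le D(-r)<1$ and $\log P(-r)=O(\log(1+r))$ are incompatible with a factor $e^{-cr}$ unless $c=0$. So your remark that modulus information cannot fix $c$ holds only on the imaginary axis.

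You instead obtain $\Hf=e^{cz}P$ from the genus-one Hadamard product of \Cref{cor:hadamard}, with the explicit constant $c=\gamma-\sum_n(1/n-1/\rho_n)$. You then kill $c$ by a \emph{phase} argument. Take continuous arguments along $z=iy$, normalized at $y=0$; then $\arg\Hf(iy)=cy+\arg P(iy)$ holds exactly, not merely modulo $2\pi$. The left side stays in $(0,\pi)$ by \Cref{cor:pick}, and $\arg P(iy)$ is bounded, so $c=0$. This works. For the boundedness you do not need $\varepsilon_n\to0$: since $n<\rho_n<n+1$, interlacing alone gives $0<\sum_n[\arctan(y/n)-\arctan(y/\rho_n)]<\sum_n[\arctan(y/n)-\arctan(y/(n+1))]=\arctan y$.

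Each route buys something different. Yours makes the product formula independent of Step~1 and of regularized determinants. Because your $c$ is explicit, it also yields $\gamma=\sum_n(1/n-1/\rho_n)$ directly, bypassing the trace identities of \Cref{thm:trace}. The paper's route keeps the determinant identity inside operator theory and does not depend on Hadamard's theorem or the growth analysis of \Cref{sec:growth}.

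Finally, drop your opening appeal to the paired product in \Cref{thm:main}(4). The paper derives that product from this very theorem, so citing it would be circular. Your own derivation from \Cref{cor:hadamard} is what actually carries the argument.
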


\begin{proof}
Decompose
\[
    \ell^2(\NN)=\CC u\oplus u^\perp
\]
and write
\[
    T=
    \begin{pmatrix}
        m&v^*\\
        v&T_0
    \end{pmatrix}
    \quad\text{and}\quad
    \Tperp=
    \begin{pmatrix}
        0&0\\
        0&T_0
    \end{pmatrix}.
\]
Here $m\coloneqq\langle Tu,u\rangle$. For $\abs{z}$ sufficiently small, put
$R_0\coloneqq(I-zT_0)^{-1}$ and
\[
    S(z)\coloneqq 1-zm-z^2\langle R_0v,v\rangle.
\]
The Schur-complement formula gives
\[
    \left\langle(I-zT)^{-1}u,u\right\rangle=S(z)^{-1}.
\]
Moreover, we have
\[
    I-zT
    =
    \begin{pmatrix}
        1&-zv^*R_0\\
        0&I
    \end{pmatrix}
    \begin{pmatrix}
        S(z)&0\\
        -zv&I-zT_0
    \end{pmatrix}.
\]
It follows that
\begin{equation}\label{eq:relative-determinant-factorization}
\begin{aligned}
    (I-z\Tperp)(I-zT)^{-1}
    ={}&
    \begin{pmatrix}
        1&0\\
        zv&I
    \end{pmatrix}
    \begin{pmatrix}
        S(z)^{-1}&zS(z)^{-1}v^*R_0\\
        0&I
    \end{pmatrix}.
\end{aligned}
\end{equation}
Both factors on the right-hand side of
\eqref{eq:relative-determinant-factorization} differ from the identity
by finite-rank operators. The first is unipotent and has Fredholm
determinant $1$; the determinant of the second factor is $S(z)^{-1}$. Hence
\begin{equation}\label{eq:relative-determinant-local}
    D(z)=S(z)^{-1}
    =
    \left\langle(I-zT)^{-1}u,u\right\rangle
    =
    \Hf(z)
\end{equation}
for $\abs{z}$ small. Both sides of \eqref{eq:relative-determinant-local}
are meromorphic on
$\CC\setminus\NN$, so the equality $D(z)=\Hf(z)$ holds there.

Put
\[
    P(z)
    \coloneqq
    \prod_{n=1}^{\infty}
    \frac{1-z/\rho_n}{1-z/n}.
\]
The interlacing inequalities imply
\[
    0<\frac1n-\frac1{\rho_n}<\frac1{n^2},
\]
so $P$ converges locally uniformly away from the positive integers.
Both $T$ and $\Tperp$ are Hilbert--Schmidt, while $T-\Tperp$ has
finite rank. The multiplicativity and eigenvalue-product formulas for
Fredholm and regularized determinants
\cite[Thm.~3.8, p.~256; Thm.~4.2, p.~258; Thm.~6.2,
p.~262]{Simon1977} therefore give
\begin{equation}\label{eq:relative-det2-quotient}
    D(z)
    =
    e^{z\Tr(T-\Tperp)}
    \frac{\det_2(I-z\Tperp)}{\det_2(I-zT)}
    =e^{cz}P(z)
\end{equation}
for some real constant $c$.

Only the normalization $c=0$ is specific to the present pair. For
$r\geq1$, the Markov representation gives
\[
    \frac1{2(1+r)}\leq D(-r)<1.
\]
Writing the $n$th factor of $P(-r)$ as $1-\delta_n(r)$, where
\[
    \delta_n(r)
    \coloneqq\frac{r(\rho_n-n)}{\rho_n(n+r)},
    \qquad
    0<\delta_n(r)\leq\frac{r}{n(n+r)},
\]
shows that $\log P(-r)=O(\log(1+r))$: the first factor is bounded
below by $1/\rho_1$, and for $n\geq2$ we have
$-\log(1-\delta_n)\leq2\delta_n$. Thus
$\log(D(-r)/P(-r))=O(\log(1+r))$. Equation
\eqref{eq:relative-det2-quotient} now forces $c=0$, proving
\eqref{eq:relative-determinant-product}.
\end{proof}

\begin{corollary}[Paired product]\label{cor:paired-product}
We have
\begin{equation}\label{eq:paired-product}
    \Gf(z)
    =
    \prod_{n=1}^{\infty}
    \left(1+\frac{z}{n}\right)
    \left(1-\frac{z}{\rho_n}\right).
\end{equation}
The product converges locally uniformly on $\CC$.
\end{corollary}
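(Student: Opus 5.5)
The plan is to combine the relative product \eqref{eq:relative-determinant-product} of \Cref{thm:determinant} with the Weierstrass product for the sine. Recall
\[
    \frac{\sin\pi z}{\pi z}
    =
    \prod_{n=1}^{\infty}\left(1-\frac{z}{n}\right)\left(1+\frac{z}{n}\right),
\]
with locally uniform convergence on $\CC$. Then, for $z\notin\NN$, \eqref{eq:completed-reflection} gives
\[
    \Gf(z)=\frac{\sin\pi z}{\pi z}\Hf(z).
\]
Multiplying the two products factor by factor, the factors $1-z/n$ cancel, leaving the $n$th factor $(1+z/n)(1-z/\rho_n)$. Because both infinite products converge (to nonzero limits off their zeros and poles), the partial products of the paired product are exactly the products of the corresponding partial products. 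So the paired product converges to $\Gf(z)$ pointwise on $\CC\setminus(\NN\cup Z(\Gf))$.

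Next I would show locally uniform convergence on all of $\CC$ directly, independent of the cancellation. Expand the $n$th factor:
\[
    \left(1+\frac zn\right)\left(1-\frac z{\rho_n}\right)
    =
    1+z\left(\frac1n-\frac1{\rho_n}\right)-\frac{z^2}{n\rho_n}.
\]
The interlacing from \Cref{thm:zeros} gives $0<1/n-1/\rho_n<1/n^2$, and also $1/(n\rho_n)<1/n^2$. Hence on $\abs z\le R$ the $n$th factor is $1+a_n(z)$ with $\abs{a_n(z)}\le (R+R^2)/n^2$. Since $\sum (R+R^2)/n^2<\infty$, the standard Weierstrass criterion gives locally uniform convergence on $\CC$ to an entire function $F$, whose zeros are exactly $\{-n\}\cup\{\rho_n\}$.

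Finally, $F=\Gf$ on the dense open set found in the first step, so $F=\Gf$ everywhere by continuity. This proves \eqref{eq:paired-product}.

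The only delicate point is justifying the factor-by-factor merging of two conditionally organized products. Bounding the paired factors directly avoids relying on rearrangement, so I expect no real obstacle. One can also cross-check the result against \Cref{cor:hadamard}: taking logarithmic derivatives at $0$ recovers $\Gf'(0)=\sum(1/n-1/\rho_n)=\gamma$, which is consistent with \Cref{prop:basic}.
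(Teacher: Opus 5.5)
Your proposal is correct and follows the paper's own argument. You multiply the relative product \eqref{eq:relative-determinant-product} by Euler's sine product, cancel the factors $1-z/n$, and use the interlacing bound $(1+z/n)(1-z/\rho_n)=1+O_K(n^{-2})$ to get locally uniform convergence on $\CC$. Your explicit bound $\abs{a_n(z)}\leq (R+R^2)/n^2$ and the extension from $\CC\setminus\NN$ by continuity just spell out details the paper leaves implicit.
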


\begin{proof}
Multiply \eqref{eq:relative-determinant-product} by Euler's product
\[
    \frac{\sin\pi z}{\pi z}
    =
    \prod_{n=1}^{\infty}
    \left(1-\frac{z}{n}\right)
    \left(1+\frac{z}{n}\right).
\]
The paired factors satisfy
\[
    \left(1+\frac{z}{n}\right)
    \left(1-\frac{z}{\rho_n}\right)
    =
    1+O_K(n^{-2})
\]
uniformly for $z$ in a fixed compact set $K$. Hence the paired product
converges locally uniformly.
\end{proof}

\begin{corollary}[Laguerre--P\'olya property]\label{cor:LP}
The Gregory function belongs to the Laguerre--P\'olya class, the
locally uniform closure of real polynomials with only real zeros.
Consequently, every derivative of $\Gf$ has only real zeros.
\end{corollary}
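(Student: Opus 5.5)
The plan is to exhibit $\Gf$ directly as a locally uniform limit of real polynomials with only real zeros, using the paired product of \Cref{cor:paired-product}. For $N\geq1$, put
\[
    p_N(z)\coloneqq\prod_{n=1}^{N}\left(1+\frac{z}{n}\right)\left(1-\frac{z}{\rho_n}\right).
\]
Each $p_N$ has real coefficients, since $\rho_n\in\RR$ by \Cref{thm:zeros}. Its zeros are exactly $-1,\ldots,-N$ and $\rho_1,\ldots,\rho_N$, so all of them are real. By \Cref{cor:paired-product}, $p_N\to\Gf$ locally uniformly on $\CC$. By the definition recalled in \Cref{sec:growth}, this already places $\Gf$ in the Laguerre--P\'olya class.

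For the statement about derivatives, I would combine two facts.
\begin{enumerate}[label=\textnormal{(\roman*)}]
    \item By Gauss--Lucas, or simply Rolle's theorem applied to a real polynomial of degree $d$ with $d$ real zeros counted with multiplicity, each derivative $p_N^{(k)}$ is again a real polynomial with only real zeros.
    \item By Weierstrass's theorem, locally uniform convergence $p_N\to\Gf$ implies $p_N^{(k)}\to\Gf^{(k)}$ locally uniformly for every $k$.
\end{enumerate}
Hence each $\Gf^{(k)}$ is itself a locally uniform limit of real polynomials with only real zeros, and so lies in the Laguerre--P\'olya class.

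It remains to show that a function in this class, which is not identically zero, has only real zeros. This is Hurwitz's theorem. Suppose $\Gf^{(k)}$ had a nonreal zero $w$. Since $\Gf^{(k)}\not\equiv0$, we can choose a small disc around $w$ that avoids $\RR$ and on whose boundary circle $\Gf^{(k)}$ does not vanish. Hurwitz's theorem then forces $p_N^{(k)}$ to have a zero in that disc for all large $N$, contradicting (i).

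The only point that needs checking is that $\Gf^{(k)}\not\equiv0$. This holds because $\Gf$ is nonconstant of exponential type $\pi$ (\Cref{thm:cartwright}), so it is not a polynomial, and hence no derivative of it vanishes identically. I expect no real obstacle here: the substantive input, locally uniform convergence of the real-rooted paired product, is already supplied by \Cref{cor:paired-product}.
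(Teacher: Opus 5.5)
Your proposal is correct and follows the paper's own proof. The paper likewise takes the real-rooted partial products of the paired product in \Cref{cor:paired-product} and then invokes Cauchy's integral formula (your Weierstrass step), Rolle's theorem, and Hurwitz's theorem for the derivatives; your explicit check that $\Gf^{(k)}\not\equiv0$ spells out a detail the paper leaves implicit.
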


\begin{proof}
The finite partial products in \eqref{eq:paired-product} are
real-rooted polynomials and converge locally uniformly to $\Gf$.
The classical Laguerre--P\'olya characterization therefore places
$\Gf$ in the class
\cite[Def.~3.1 and Thms.~3.2--3.3, pp.~42--46]{HirschmanWidder}.
Cauchy's integral formula, Rolle's theorem, and Hurwitz's theorem then
show that every derivative of $\Gf$ has only real zeros.
\end{proof}

For $k\geq1$, put
\[
    \mu_k
    \coloneqq 
    \langle T^ku,u\rangle
    =
    \sum_{n=1}^{\infty}\frac{\abs{G_n}}{n^k}.
\]
Then
\[
    \Hf(z)=1+\sum_{k=1}^{\infty}\mu_kz^k
    \quad\text{for}\quad \abs{z} < 1.
\]

Recall that an operator $A$ on a Hilbert space is trace class if its
singular values are summable, equivalently if $\Tr\abs{A}<\infty$.

\begin{theorem}[Relative trace identities]\label{thm:trace}
For every positive integer $k$, the operator $T^k-\Tperp^k$ is trace class
and
\begin{equation}\label{eq:trace-identity}
    \tau_k
    \coloneqq 
    \Tr(T^k-\Tperp^k)
    =
    \sum_{n=1}^{\infty}
    \left(\frac1{n^k}-\frac1{\rho_n^k}\right).
\end{equation}
Moreover, we have
\[
    \log\Hf(z)
    =
    \sum_{k=1}^{\infty}\frac{\tau_k}{k}z^k
    \quad\text{for}\quad \abs{z} < 1.
\]
In particular, we have
\[
    \gamma
    =
    \Tr(T-\Tperp)
    =
    \sum_{n=1}^{\infty}
    \left(\frac1n-\frac1{\rho_n}\right).
\]
\end{theorem}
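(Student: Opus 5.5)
Three facts drive the proof. The operators $T$ and $\Tperp$ are self-adjoint, compact and nonnegative. Their difference $T-\Tperp=QT+TQ-QTQ$ has rank at most two. And by \Cref{prop:resolvent} together with the decomposition $\ell^2(\NN)=\CC u\oplus u^\perp$, the nonzero eigenvalues of $\Tperp$ are exactly the simple eigenvalues $1/\rho_n$, each of multiplicity one.

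\textbf{Trace class.} I would first show that $T^k-\Tperp^k$ is trace class for each $k\geq1$. Write $\Tperp=PTP$ and expand the telescoping identity
\[
    T^k-\Tperp^k=\sum_{j=0}^{k-1}\Tperp^{\,j}(T-\Tperp)T^{k-1-j}.
\]
Each summand is bounded times finite rank times bounded, hence finite rank. So $T^k-\Tperp^k$ is finite rank and in particular trace class.

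\textbf{Computing the trace.} Computing the trace as the difference of eigenvalue sums is not immediate when $k=1$, since neither $T$ nor $\Tperp$ is trace class. I would route it through the relative determinant instead. By \Cref{thm:determinant}, for $\abs{z}<1$ we have
\[
    D(z)=\det\bigl((I-z\Tperp)(I-zT)^{-1}\bigr)=\Hf(z)=\prod_n\frac{1-z/\rho_n}{1-z/n}.
\]
On one side, the series $\Hf(z)=1+\sum\mu_kz^k$ shows that $\Hf$ is nonvanishing on $\abs{z}<1$; indeed $\Hf$ has no zeros there, since $\rho_n>1$. Taking the logarithm of the product termwise, which is justified by the absolute bound $0<1/n-1/\rho_n<1/n^2$ from interlacing, gives
\[
    \log\Hf(z)=\sum_n\bigl(\log(1-z/\rho_n)-\log(1-z/n)\bigr)=\sum_{k\geq1}\frac{z^k}{k}\sum_n\bigl(n^{-k}-\rho_n^{-k}\bigr).
\]
The interchange of sums is fine because $\sum_n(n^{-k}-\rho_n^{-k})$ converges absolutely and is bounded by $k\sum_n(1/n-1/\rho_n)$, so the double series converges for $\abs{z}<1$.

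\textbf{Identifying the coefficients.} To identify these coefficients with $\Tr(T^k-\Tperp^k)$, use the standard formula $\frac{d}{dz}\log\det(I+F(z))=\Tr\bigl((I+F)^{-1}F'\bigr)$ for finite-rank holomorphic $F$. Applied to $(I-z\Tperp)(I-zT)^{-1}$, it gives
\[
    \frac{D'(z)}{D(z)}=\Tr\bigl(T(I-zT)^{-1}-\Tperp(I-z\Tperp)^{-1}\bigr)=\sum_{k\geq1}z^{k-1}\Tr(T^k-\Tperp^k)
\]
for small $\abs{z}$. The middle expression is formally a difference of non-trace-class operators. To make it rigorous, I would write the difference as the resolvent-difference formula
\[
    (I-z\Tperp)^{-1}(T-\Tperp)(I-zT)^{-1},
\]
which is finite rank, and then expand in a Neumann series whose $k$th coefficient is exactly the telescoped finite-rank sum above. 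Comparing coefficients with the logarithmic derivative of the product yields \eqref{eq:trace-identity} together with the logarithm expansion.

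\textbf{The constant $\gamma$.} The case $k=1$ gives $\tau_1=\Hf'(0)/\Hf(0)$. From $\Hf(z)=\frac{\pi z}{\sin\pi z}\Gf(z)$, the first factor is even, so $\Hf'(0)=\Gf'(0)=\gamma$ by \Cref{prop:basic}, while $\Hf(0)=1$. Hence $\tau_1=\gamma$.

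\textbf{Main obstacle.} I expect the hardest step to be the rigorous justification of the trace/log-derivative identity, namely handling $\Tr$ of a difference whose pieces are not individually trace class. Working entirely with the finite-rank resolvent difference, as above, is the plan.
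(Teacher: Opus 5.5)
Your proposal is correct and follows essentially the same route as the paper: telescoping to get trace class, the Fredholm logarithmic-derivative formula expanded as a trace-norm Neumann series, and coefficient comparison with the logarithm of the product in \Cref{thm:determinant}. Two small differences are worth noting. First, your resolvent identity $T(I-zT)^{-1}-\Tperp(I-z\Tperp)^{-1}=(I-z\Tperp)^{-1}(T-\Tperp)(I-zT)^{-1}\eqqcolon R$ makes rigorous the split the paper writes only formally; strictly, $(I+F)^{-1}F'=(I-zT)R(I-zT)^{-1}$, so one use of cyclicity of the trace is needed to reach $\Tr R$. Second, you read off $\tau_1=\Hf'(0)/\Hf(0)=\gamma$ from the logarithmic series, whereas the paper computes $\Tr(T-\Tperp)=\langle Tu,u\rangle=\mu_1$ directly; both give the same result.
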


\begin{proof}
The telescoping identity
\[
    T^k-\Tperp^k
    =
    \sum_{j=0}^{k-1}T^{k-1-j}(T-\Tperp)\Tperp^j
\]
shows that $T^k-\Tperp^k$ is trace class. Since
\(\lVert T\rVert,\lVert\Tperp\rVert\leq1\), the same identity gives
\[
    \lVert T^k-\Tperp^k\rVert_1
    \leq
    k\lVert T-\Tperp\rVert_1.
\]
Thus the resolvent expansion below converges in trace norm for
\(\abs{z}<1\). The logarithmic-derivative formula for Fredholm determinants
\cite[Eq.~(7.9), p.~268]{Simon1977} gives
\begin{equation}\label{eq:relative-log-derivative}
\begin{aligned}
    \frac{D'(z)}{D(z)}
    &=
    \Tr\left(
        T(I-zT)^{-1}-\Tperp(I-z\Tperp)^{-1}
    \right)\\
    &=
    \sum_{k=1}^{\infty}
    z^{k-1}\Tr(T^k-\Tperp^k).
\end{aligned}
\end{equation}
Integrating from $0$, and using $D(0)=1$, we obtain
\[
    \log D(z)
    =
    \sum_{k=1}^{\infty}
    \frac{z^k}{k}\Tr(T^k-\Tperp^k).
\]
On the other hand, \eqref{eq:relative-determinant-product} also yields
\[
    \log D(z)
    =
    \sum_{k=1}^{\infty}
    \frac{z^k}{k}
    \sum_{n=1}^{\infty}
    \left(\frac1{n^k}-\frac1{\rho_n^k}\right).
\]
For $k=1$, the inner series converges absolutely because its terms are
$O(n^{-2})$; for $k\geq2$, convergence is immediate. Comparison of
coefficients proves the trace identities.

Finally, we have
\[
    T-\Tperp=QT+TQ-QTQ,
\]
so
\[
    \Tr(T-\Tperp)=\langle Tu,u\rangle=\mu_1.
\]
Since $\Hf'(0)=\Gf'(0)=\gamma$, we have $\mu_1=\gamma$.
\end{proof}

For reference, the first few trace polynomials are
\[
\begin{aligned}
    \tau_1&=\mu_1,\\
    \tau_2&=2\mu_2-\mu_1^2,\\
    \tau_3&=3\mu_3-3\mu_1\mu_2+\mu_1^3,\\
    \tau_4&=
    4\mu_4-4\mu_1\mu_3-2\mu_2^2
    +4\mu_1^2\mu_2-\mu_1^4.
\end{aligned}
\]

\section{The reciprocal transform}\label{sec:reciprocal}

The \defi{reciprocal transform} $1/\Hf$ is governed by the same
real-zero structure. Its residues at the positive zeros produce a second
expansion for Euler's constant.

\begin{theorem}[Reciprocal partial-fraction expansion]\label{thm:reciprocal}
Let
\[
    \omega_n\coloneqq \frac1{\Hf'(\rho_n)}>0.
\]
Then
\begin{equation}\label{eq:reciprocal-transform-expansion}
    \frac1{\Hf(z)}
    =
    1-
    z\sum_{n=1}^{\infty}
    \frac{\omega_n}{\rho_n(\rho_n-z)}.
\end{equation}
The series converges locally uniformly away from the $\rho_n$.
Consequently, we have
\[
    \gamma
    =
    \sum_{n=1}^{\infty}
    \frac1{\rho_n^2\Hf'(\rho_n)}.
\]
\end{theorem}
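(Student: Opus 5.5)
We start from the fact that $1/\Hf$ is meromorphic on $\CC$. By \Cref{thm:zeros}, its only poles are simple poles at the $\rho_n$, and it vanishes at the positive integers. Moreover $\Hf'(\rho_n)=\sum_m m\abs{G_m}/(m-\rho_n)^2>0$, so the residue of $1/\Hf$ at $\rho_n$ is $1/\Hf'(\rho_n)=\omega_n>0$. The claimed formula is the once-subtracted Mittag-Leffler expansion normalized at $z=0$, where $\Hf(0)=1$. Indeed,
\[
\frac{\omega_n}{z-\rho_n}+\frac{\omega_n}{\rho_n}=\frac{-z\,\omega_n}{\rho_n(\rho_n-z)}.
\]

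The cleanest route uses the Pick structure. By \Cref{cor:pick}, $-1/\Hf$ is again a Pick function, and it is real and holomorphic on $\RR$ away from the $\rho_n$. The Nevanlinna representation then gives
\[
-\frac1{\Hf(z)}=a+bz+\sum_n \omega_n\left(\frac1{\rho_n-z}-\frac{\rho_n}{1+\rho_n^2}\right)
\]
with $b\ge0$, $a$ real, and $\sum_n\omega_n/(1+\rho_n^2)<\infty$. The measure is purely atomic because $-1/\Hf$ extends meromorphically across $\RR$ with only simple poles at the $\rho_n$, so the boundary imaginary part vanishes off those points.

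Two pieces of data remain to be determined. First, we show $b=0$. Take $z=iy$ with $y\to\infty$. From \eqref{eq:markov}, $\Hf(iy)\to0$ only like $\sum_n n\abs{G_n}/(n-iy)$, so $\abs{\Hf(iy)}$ decays at most like a power of $1/\log y$ divided by something of order $1$. More concretely, \Cref{prop:sector} gives $\Hf(iy)=\frac{\pi z}{\sin \pi z}\Gf(z)\sim \frac{-1}{\Log z-i\pi}$, of size roughly $1/\log y$. Hence $\abs{1/\Hf(iy)}=O(\log y)=o(y)$, which forces $b=0$. Second, we fix $a$ by evaluating at $z=0$, using $1/\Hf(0)=1$. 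This yields exactly \eqref{eq:reciprocal-transform-expansion} once the subtracted terms are regrouped as above. That regrouping is legitimate because $\sum_n\omega_n/\rho_n^2<\infty$, which follows from $\sum_n\omega_n/(1+\rho_n^2)<\infty$. Local uniform convergence away from the $\rho_n$ then follows from the bound $\abs{\omega_n/(\rho_n(\rho_n-z))}\le C_K\,\omega_n/\rho_n^2$ on compact sets $K$.

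For the final identity, differentiate \eqref{eq:reciprocal-transform-expansion} at $z=0$, which term-by-term differentiation permits by the local uniform convergence. This gives $-\Hf'(0)/\Hf(0)^2=-\sum_n\omega_n/\rho_n^2$. Since $\Hf'(0)=\Gf'(0)=\gamma$ (the factor $\pi z/\sin\pi z$ is even) and $\Hf(0)=1$, we obtain $\gamma=\sum_n 1/(\rho_n^2\Hf'(\rho_n))$.

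The main obstacle is the growth step $b=0$: we must justify that $1/\Hf(iy)$ is $o(y)$. If the sectorial asymptotics along the imaginary axis are awkward to apply, a fallback is the direct estimate
\[
\im\Hf(iy)=y\sum_n \frac{n\abs{G_n}}{n^2+y^2}\ge \sum_{n\le y}\frac{n\abs{G_n}}{2y}.
\]
Combined with \eqref{eq:gregory-coefficient-asymptotic}, this lower bound is $\gg 1/(\log y)^2$, which still yields $\abs{1/\Hf(iy)}=o(y)$.
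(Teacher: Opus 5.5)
Your proposal is correct and follows the paper's proof essentially step for step. Both arguments use the Nevanlinna--Herglotz representation of the Pick function $-1/\Hf$ with atoms of mass $\omega_n$ at the $\rho_n$, kill the linear coefficient via $-1/\Hf(iy)=O(\log y)$, normalize at $z=0$, and differentiate at the origin.

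One small slip: \Cref{prop:sector} gives $\Hf(iy)\sim 1/(\Log z-i\pi)$, with positive imaginary part as the Pick property requires, not $-1/(\Log z-i\pi)$. Only the size $\asymp 1/\log y$ is used, though, and your fallback lower bound on $\im\Hf(iy)$ is a valid elementary substitute that avoids the sectorial asymptotics altogether.
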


\begin{proof}
By \Cref{cor:pick}, the function $-1/\Hf$ is a Pick
function. Its poles are the simple real poles $\rho_n$, and near such
a pole
\[
    -\frac1{\Hf(z)}
    =
    \frac{\omega_n}{\rho_n-z}+O(1).
\]
The Herglotz representation theorem and its Stieltjes inversion formula
\cite[Thm.~B.2, pp.~300--301]{Teschl} provide a positive representing
measure on the real line. On every compact interval avoiding the
$\rho_n$, the function $-1/\Hf$ extends holomorphically across the real
axis and is real-valued there. Its imaginary part therefore tends to zero
uniformly as the axis is approached, and Stieltjes inversion shows that
the representing measure vanishes on that interval. Finally,
\cite[Lem.~B.10, p.~305]{Teschl} identifies the mass at $\rho_n$ as
\[
    \lim_{\varepsilon\downarrow0}
    \frac{\varepsilon}{i}
    \left(-\frac1{\Hf(\rho_n+i\varepsilon)}\right)
    =\omega_n.
\]
Thus the measure consists precisely of these atoms, and hence
\[
    -\frac1{\Hf(z)}
    =
    \alpha z+\beta
    +
    \sum_{n=1}^{\infty}
    \omega_n
    \left(
        \frac1{\rho_n-z}
        -\frac{\rho_n}{1+\rho_n^2}
    \right),
\]
where $\alpha\geq0$ and
$\sum_n\omega_n/(1+\rho_n^2)<\infty$.
Equations \eqref{eq:sector-upper} and \eqref{eq:completed-reflection}
imply
\[
    -\frac1{\Hf(iy)}=O(\log y)
    \quad\text{as}\quad y \to \infty,
\]
so
\begin{equation}\label{eq:herglotz-alpha-limit}
    \lim_{y\to\infty}\frac{-1/\Hf(iy)}{iy}=0.
\end{equation}
In the Herglotz representation, the limit in \eqref{eq:herglotz-alpha-limit}
is $\alpha$, and hence $\alpha=0$. Subtracting the value at $z=0$, and using
$\Hf(0)=1$, we obtain
\[
    -\frac1{\Hf(z)}+1
    =
    \sum_{n=1}^{\infty}
    \omega_n
    \left(
        \frac1{\rho_n-z}-\frac1{\rho_n}
    \right).
\]
The expansion \eqref{eq:reciprocal-transform-expansion} follows. Its
local uniform
convergence away from the poles follows from
$\sum_n\omega_n/(1+\rho_n^2)<\infty$ and the fact that
$\inf_n\rho_n>0$. Differentiating at the origin, we find
\[
    -\gamma
    =
    \left(\frac1{\Hf}\right)'(0)
    =
    -\sum_{n=1}^{\infty}\frac{\omega_n}{\rho_n^2}.
\]
\end{proof}

\section{The relative zero zeta function}\label{sec:relative-zeta}

The trace identities can be packaged as a zeta function. The natural
object compares the natural-number sequence
$(n)_{n\geq1}$ with the positive-zero sequence $(\rho_n)_{n\geq1}$.

\begin{definition}\label{def:relative-zeta}
For $\re s>0$, define the \defi{relative zero zeta function}
\begin{equation}\label{eq:relative-zero-zeta}
    \XiG(s)
    \coloneqq 
    \sum_{n=1}^{\infty}
    \left(n^{-s}-\rho_n^{-s}\right).
\end{equation}
It is relative in the sense of \Cref{def:relative-determinant}: it measures
the displacement from the reference sequence $(n)_{n\geq1}$ of positive
integers to the positive-zero sequence $(\rho_n)_{n\geq1}$.
\end{definition}

\begin{proposition}\label{prop:Xi}
The series in \eqref{eq:relative-zero-zeta} converges locally uniformly
in $\re s>0$.
Thus $\XiG$ is holomorphic there. For every positive integer $k$, we have
\[
    \XiG(k)=\tau_k.
\]
In particular, we have
\[
    \XiG(1)=\gamma.
\]
\end{proposition}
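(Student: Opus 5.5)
The plan is to bound each summand of \eqref{eq:relative-zero-zeta} uniformly on compact subsets of $\re s>0$ by a summable sequence, and then to read off the special values from \Cref{thm:trace}.

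\emph{Convergence.} Fix a compact set $K\subset\{\re s>0\}$, and put $\sigma_0\coloneqq\min_K\re s>0$ and $C\coloneqq\max_K\abs{s}$. For each $n$, write
\[
    n^{-s}-\rho_n^{-s}
    =
    s\int_n^{\rho_n}x^{-s-1}\dd x.
\]
Since $n<\rho_n<n+1$ by \Cref{thm:zeros}, this gives
\[
    \abs{n^{-s}-\rho_n^{-s}}
    \leq
    C\,\varepsilon_n\,n^{-\sigma_0-1}.
\]
Because $0<\varepsilon_n<1$, the right-hand side is summable, uniformly for $s\in K$. The Weierstrass M-test then gives locally uniform convergence, and each summand is entire in $s$, so $\XiG$ is holomorphic on $\re s>0$.

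\emph{Special values.} For positive integers $k$, the series \eqref{eq:relative-zero-zeta} at $s=k$ is exactly the right-hand side of the trace identity \eqref{eq:trace-identity}. Therefore
\[
    \XiG(k)=\tau_k,
\]
and the case $k=1$ reads $\XiG(1)=\tau_1=\gamma$ by the final assertion of \Cref{thm:trace}.

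\emph{Main obstacle.} The only delicate point is near $\re s=0$, where the crude bound $\varepsilon_n<1$ still suffices because $\sigma_0>0$ is fixed on $K$. No finer displacement asymptotics from \Cref{thm:asymptotic} are needed, so I expect no real obstacle.
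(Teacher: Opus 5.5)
Your proof is correct and follows essentially the same approach as the paper. You write $n^{-s}-\rho_n^{-s}=s\int_n^{\rho_n}x^{-s-1}\dd x$ to get a bound of order $n^{-\sigma_0-1}$ uniformly on compacta, conclude holomorphy by the Weierstrass test, and read the special values off the trace identity \eqref{eq:trace-identity} together with $\tau_1=\gamma$ from \Cref{thm:trace}.
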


\begin{proof}
If $K$ is a compact subset of $\re s>0$, put
\[
    \sigma_0\coloneqq \inf_{s\in K}\re s>0.
\]
For $s\in K$, we have
\[
    n^{-s}-\rho_n^{-s}
    =
    s\int_n^{\rho_n}x^{-s-1}\dd x,
\]
so
\[
    \abs{n^{-s}-\rho_n^{-s}}
    \leq
    C_Kn^{-\sigma_0-1}.
\]
The Weierstrass test proves local uniform convergence. The values at
positive integers follow from \eqref{eq:trace-identity}.
\end{proof}

To understand $\XiG(s)$ as $s\to0^+$, we will represent it as a Mellin
transform of $\log(1/\Hf(-t))$. The next two lemmas identify the
large-$t$ behavior that controls the origin.

\begin{lemma}\label{lem:gregory-tail-estimates}
As $t\to\infty$, we have
\begin{equation}\label{eq:gregory-tail-mass}
    \sum_{n>t}\abs{G_n}\sim\frac1{\log t},
\end{equation}
\begin{equation}\label{eq:gregory-truncated-first-moment}
    \sum_{n\leq t}n\abs{G_n}=O\left(\frac{t}{\log^2t}\right),
\end{equation}
and
\begin{equation}\label{eq:gregory-tail-remainder}
    t\sum_{n>t}\frac{\abs{G_n}}{n+t}=O(\log^{-2}t).
\end{equation}
\end{lemma}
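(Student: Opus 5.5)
All three estimates follow from the coefficient asymptotic \eqref{eq:gregory-coefficient-asymptotic}, $\abs{G_n}\sim 1/(n(\log n)^2)$, together with elementary comparisons of sums with integrals. For a cleaner first estimate one can also use the Schr\"oder representation \eqref{eq:schroeder}.

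For \eqref{eq:gregory-tail-mass}, the plan is as follows. Given $\eta>0$, for $n$ beyond some $n_0$ we have $(1-\eta)/(n\log^2n)\le\abs{G_n}\le(1+\eta)/(n\log^2n)$. The function $x\mapsto 1/(x\log^2x)$ is decreasing for $x>1$, so
\[
    \sum_{n>t}\frac1{n\log^2n}=\int_t^\infty\frac{\dd x}{x\log^2x}+O\left(\frac1{t\log^2t}\right)=\frac1{\log t}+O\left(\frac1{t\log^2 t}\right).
\]
Letting $\eta\to0$ then gives $\sum_{n>t}\abs{G_n}\sim1/\log t$.

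For \eqref{eq:gregory-truncated-first-moment}, it suffices to show $n\abs{G_n}\le C/\log^2(n+1)$ for all $n\ge1$. We then split the sum at $\sqrt t$. The range $n\le\sqrt t$ contributes at most $C\sqrt t$, which is $O(t/\log^2t)$. The range $\sqrt t<n\le t$ contributes at most $t\cdot C/\log^2\sqrt t=4Ct/\log^2t$.

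For \eqref{eq:gregory-tail-remainder}, the first step is to note that for $n>t$ we have $t/(n+t)\le\min(1,t/n)$. We split the tail at $n=t^2$. The block $t<n\le t^2$ contributes at most $\sum_{n>t}\abs{G_n}\cdot t/(n+t)$. I would refine this with dyadic blocks: on the block $2^jt<n\le2^{j+1}t$ the factor $t/(n+t)$ is at most $2^{-j}$, and the mass of the block is $O\big(1/(2^jt\cdot\log^2 t)\cdot 2^jt\big)=O(\log^{-2}t)$. Summing over $j\ge0$ against the weights $2^{-j}$ gives $O(\log^{-2}t)$. The same dyadic argument already covers all $n>t$, so no second split is needed. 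Directly,
\[
    t\sum_{n>t}\frac{\abs{G_n}}{n+t}\le C\,t\sum_{n>t}\frac{1}{n^2\log^2n}\le\frac{C'}{\log^2t}.
\]
Here the last step compares with $\int_t^\infty x^{-2}\log^{-2}x\,\dd x\le\log^{-2}t\int_t^\infty x^{-2}\dd x=1/(t\log^2t)$.

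There is no real obstacle. The only care needed is to use a uniform bound $\abs{G_n}\le C/(n\log^2(n+1))$ valid for all $n$, rather than only the asymptotic equivalence, in \eqref{eq:gregory-truncated-first-moment} and \eqref{eq:gregory-tail-remainder}. This bound follows from \eqref{eq:gregory-coefficient-asymptotic} because the finitely many remaining terms are bounded.
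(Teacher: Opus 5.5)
Your proposal is correct and follows the paper's proof: all three estimates come from $\abs{G_n}\sim 1/(n(\log n)^2)$ plus integral comparison. In particular, your final ``direct'' bound for \eqref{eq:gregory-tail-remainder}, via $1/(n+t)\le 1/n$ and $\int_t^\infty x^{-2}\log^{-2}x\,\dd x$, is exactly the paper's argument, so the dyadic detour and the abandoned split at $t^2$ can be deleted. Your split at $\sqrt t$ for \eqref{eq:gregory-truncated-first-moment} just spells out the paper's one-line claim $\sum_{3\le n\le t}\log^{-2}n=O(t/\log^2t)$.
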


\begin{proof}
The estimate \eqref{eq:gregory-tail-mass} follows from
\eqref{eq:gregory-coefficient-asymptotic} by partial summation,
equivalently by
comparison with
\[
    \int_t^\infty\frac{\dd x}{x\log^2x}=\frac1{\log t}.
\]
The estimate \eqref{eq:gregory-truncated-first-moment} follows from
\[
    \sum_{n\leq t}n\abs{G_n}\ll\sum_{3\leq n\leq t}\frac1{\log^2n}
    =O\left(\frac{t}{\log^2t}\right),
\]
with the finitely many initial terms absorbed into the constant. For
\eqref{eq:gregory-tail-remainder}, use
\[
    t\sum_{n>t}\frac{\abs{G_n}}{n+t}
    \leq
    t\sum_{n>t}\frac{\abs{G_n}}{n}
    \ll
    t\int_t^\infty\frac{\dd x}{x^2\log^2x}
    =O(\log^{-2}t).
\]
\end{proof}

\begin{lemma}\label{lem:H-negative}
As $t\to\infty$, we have
\begin{equation}\label{eq:H-negative-asymptotic}
    \Hf(-t)\sim\frac1{\log t}.
\end{equation}
\end{lemma}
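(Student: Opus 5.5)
We will prove the asymptotic directly from the Markov representation of \Cref{thm:markov}, evaluated at $z=-t$ with $t\to\infty$:
\[
    \Hf(-t)=\sum_{n=1}^{\infty}\frac{n\abs{G_n}}{n+t}.
\]
The plan is to split this sum at $n=t$. The head should be negligible, and the tail should be governed by the tail mass \eqref{eq:gregory-tail-mass}. All the needed estimates have already been collected in \Cref{lem:gregory-tail-estimates}.

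For the head $n\leq t$, we use $n+t\geq t$ to bound
\[
    \sum_{n\leq t}\frac{n\abs{G_n}}{n+t}
    \leq
    \frac1t\sum_{n\leq t}n\abs{G_n},
\]
which is $O(\log^{-2}t)$ by \eqref{eq:gregory-truncated-first-moment}. For the tail $n>t$, we write
\[
    \frac{n}{n+t}=1-\frac{t}{n+t}.
\]
This gives
\[
    \sum_{n>t}\frac{n\abs{G_n}}{n+t}
    =
    \sum_{n>t}\abs{G_n}
    -t\sum_{n>t}\frac{\abs{G_n}}{n+t}.
\]
The first term is $\sim 1/\log t$ by \eqref{eq:gregory-tail-mass}. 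The subtracted term is $O(\log^{-2}t)$ by \eqref{eq:gregory-tail-remainder}.

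Combining the head and tail yields $\Hf(-t)=1/\log t+o(1/\log t)$, which is \eqref{eq:H-negative-asymptotic}.

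There is no serious obstacle, since the three tail estimates in \Cref{lem:gregory-tail-estimates} were evidently designed for exactly this decomposition. The only point needing care is that the error terms are genuinely of smaller order $\log^{-2}t$ than the main term $\log^{-1}t$. As a sanity check, the result is consistent with the negative-axis estimate \eqref{eq:negative-axis-half-integer-estimate}, since $\Gf(-t)=\frac{\sin\pi t}{-\pi t}\Hf(-t)$.
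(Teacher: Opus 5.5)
Your proof is correct and is essentially identical to the paper's. Both use the same split at $n=t$, the same rewriting $n/(n+t)=1-t/(n+t)$ on the tail, and the same three estimates from \Cref{lem:gregory-tail-estimates}. (One minor slip, irrelevant to the argument: in your closing sanity check the prefactor is even, so $\Gf(-t)=\frac{\sin\pi t}{\pi t}\Hf(-t)$, not $\frac{\sin\pi t}{-\pi t}\Hf(-t)$.)
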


\begin{proof}
Equation \eqref{eq:markov} shows
\[
    \Hf(-t)
    =
    \sum_{n=1}^{\infty}\abs{G_n}\frac{n}{n+t}.
\]
Equation \eqref{eq:gregory-truncated-first-moment} implies
\[
    \sum_{n\leq t}\abs{G_n}\frac{n}{n+t}
    \leq
    \frac1t\sum_{n\leq t}n\abs{G_n}
    =
    O(\log^{-2}t),
\]
and \eqref{eq:gregory-tail-mass} together with
\eqref{eq:gregory-tail-remainder} imply
\[
    \sum_{n>t}\abs{G_n}\frac{n}{n+t}
    =
    \sum_{n>t}\abs{G_n}
    -
    t\sum_{n>t}\frac{\abs{G_n}}{n+t}
    =
    \frac1{\log t}+O(\log^{-2}t).
\]
The assertion follows.
\end{proof}

\begin{theorem}[Mellin representation and origin singularity]\label{thm:mellin}
For $0<\re s<1$, we have
\begin{equation}\label{eq:relative-zeta-mellin}
    \XiG(s)
    =
    \frac{s\sin\pi s}{\pi}
    \int_0^\infty
    t^{-s-1}\log\frac1{\Hf(-t)}
    \dd t.
\end{equation}
As $s\to0^+$, we have
\begin{equation}\label{eq:relative-zeta-origin-asymptotic}
    \XiG(s)
    =
    -s\log s-\gamma s+o(s).
\end{equation}
In particular, $\XiG$ has no meromorphic continuation through $s=0$.
\end{theorem}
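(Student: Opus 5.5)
Starting from the relative determinant product \eqref{eq:relative-determinant-product}, evaluate it at $z=-t$ with $t>0$:
\[
    \log\frac1{\Hf(-t)}
    =
    \sum_{n=1}^{\infty}
    \left(\log\Bigl(1+\frac tn\Bigr)-\log\Bigl(1+\frac t{\rho_n}\Bigr)\right),
\]
and every summand is positive because $\rho_n>n$. Next, use the elementary Mellin integral
\[
    \int_0^\infty t^{-s-1}\log(1+t/a)\dd t=\frac{\pi}{s\sin\pi s}\,a^{-s},
    \qquad 0<\re s<1,
\]
which follows from the substitution $t=au$ and one integration by parts against $\int_0^\infty u^{-s}(1+u)^{-1}\dd u=\pi/\sin\pi s$. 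Integrating the sum term by term gives \eqref{eq:relative-zeta-mellin}. For real $s\in(0,1)$ this is justified by Tonelli, since the summands are positive. It then extends to the strip by analyticity: the left side is holomorphic by \Cref{prop:Xi}, and the right side converges absolutely once we check integrability at the two ends. At $t\to0$ we have $\log(1/\Hf(-t))=\gamma t+O(t^2)$ by $\Hf'(0)=\gamma$. At $t\to\infty$, \Cref{lem:H-negative} gives $\log(1/\Hf(-t))=\log\log t+o(1)$.

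For the origin asymptotics, take $s\to0^+$ real and write $F(t)\coloneqq\log(1/\Hf(-t))$. The prefactor is $s\sin(\pi s)/\pi=s^2(1+O(s^2))$, so we need
\[
    M(s)\coloneqq\int_0^\infty t^{-s-1}F(t)\dd t
    =\frac{-\log s-\gamma}{s}+o(1/s).
\]
Split the integral at $t=e$. The piece $\int_0^e$ converges to a finite constant as $s\to0$, since $F(t)=O(t)$ near $0$, so it is $O(1)=o(1/s)$. On $[e,\infty)$, write $F(t)=\log\log t+r(t)$ with $r(t)\to0$, which is where \Cref{lem:H-negative} enters. The model integral, after the substitution $t=e^{v}$, is
\[
    \int_1^\infty e^{-sv}\log v\dd v
    =\frac1s\int_s^\infty e^{-w}\log(w/s)\dd w
    =\frac{-\gamma-\log s}{s}+O(1),
\]
using $\int_0^\infty e^{-w}\log w\dd w=-\gamma$ and the fact that the portion over $[0,s]$ is $O(s\log(1/s))$. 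The remainder $\int_1^\infty e^{-sv}r(e^v)\dd v$ is $o(1/s)$ by a standard Abelian argument: since $r(e^v)\to0$, split at a large $V$, bound the part $v\le V$ by $O(V)$ and the part $v>V$ by $\sup_{v>V}|r|/s$. Multiplying by $s^2$ gives \eqref{eq:relative-zeta-origin-asymptotic}.

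The non-continuation claim is then immediate. If $\XiG$ extended meromorphically through $0$, then $\XiG(s)/s$ would have a Laurent expansion near $0$, so it would be $O(|s|^{-m})$ for some $m$ and have a limit (finite or infinite) along $s\to0^+$ of power type. But $\XiG(s)/s=-\log s-\gamma+o(1)$ grows logarithmically, which no Laurent expansion permits: a holomorphic function has a finite limit, and a pole grows like a power. The main obstacle is the Abelian step. The error $o(1/s)$ in $\XiG/s^2$ only yields $o(s)$ in $\XiG$ if the remainder $r(t)$ really tends to $0$, and this needs the sharp form $\Hf(-t)\sim1/\log t$ of \Cref{lem:H-negative}, not merely a bound of order $1/\log t$. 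So I would make sure that lemma is invoked in its asymptotic form and check the uniformity of the $[0,e]$ piece carefully.
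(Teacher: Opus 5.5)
Your proposal is correct and follows essentially the same route as the paper: Tonelli on the positive logarithmic sum from the relative determinant product, the kernel $\int_0^\infty t^{-s-1}\log(1+t/a)\dd t=\pi a^{-s}/(s\sin\pi s)$, extension to the strip by holomorphy, and an Abelian comparison with $\log\log t$ that uses the sharp asymptotic $\Hf(-t)\sim1/\log t$. The differences are cosmetic: you split at $t=e$ rather than $t=1$, and you phrase the non-continuation via Laurent growth rather than via removability of the singularity.
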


\begin{proof}
Taking logarithms in \eqref{eq:relative-determinant-product} gives
\[
    \log\frac1{\Hf(-t)}
    =
    \sum_{n=1}^{\infty}
    \left[
        \log\left(1+\frac{t}{n}\right)
        -
        \log\left(1+\frac{t}{\rho_n}\right)
    \right].
\]
Every summand is positive. For real $s\in(0,1)$, Tonelli's theorem and
\begin{equation}\label{eq:log-mellin-kernel}
    \int_0^\infty
    t^{-s-1}\log\left(1+\frac{t}{a}\right)\dd t
    =
    \frac{\pi a^{-s}}{s\sin\pi s}
\end{equation}
combine to prove \eqref{eq:relative-zeta-mellin}. Near the origin,
\(\log(1/\Hf(-t))=O(t)\); at infinity, \Cref{lem:H-negative} gives
\(\log(1/\Hf(-t))=O(\log\log t)\). The integral in
\eqref{eq:relative-zeta-mellin} therefore converges locally uniformly for
\(0<\re s<1\) and defines a holomorphic function there. The identity
extends throughout the strip.

Equation \eqref{eq:H-negative-asymptotic} shows
\[
    \log\frac1{\Hf(-t)}
    =
    \log\log t+o(1).
\]
Moreover, we have
\[
\begin{aligned}
    \int_1^\infty t^{-s-1}\log\log t\dd t
    &=
    \int_0^\infty e^{-su}\log u\dd u\\
    &=
    \frac{-\gamma-\log s}{s}.
\end{aligned}
\]
To control the error, write it as $r(t)=o(1)$. Given $\eta>0$,
choose $T$ so that $\abs{r(t)}\leq\eta$ for $t\geq T$. Then
\[
    \int_T^\infty t^{-s-1}\abs{r(t)}\dd t
    \leq \frac{\eta T^{-s}}s,
\]
whereas the integral over $[1,T]$ is $O_T(1)$. Thus the error
contributes $o(s^{-1})$. The integral over $(0,1)$ remains bounded
because $\log(1/\Hf(-t))=O(t)$ at the origin. Since
\[
    \frac{s\sin\pi s}{\pi}=s^2+O(s^4),
\]
the asymptotic \eqref{eq:relative-zeta-origin-asymptotic} follows.

If $\XiG$ had a meromorphic continuation through $0$, its limit along
the positive real axis would force the singularity to be removable.
The quotient $\XiG(s)/s$ would then have a finite limit, contrary to
\eqref{eq:relative-zeta-origin-asymptotic}.
\end{proof}

\section{Regularized displacement and a computational reduction}\label{sec:regularized}

We finish with a monotonicity question suggested by the asymptotic
expansion. The material in this section is separate from \Cref{thm:main}.

\begin{definition}\label{def:regularized-displacement}
For the positive-zero displacement sequence $(\varepsilon_n)_{n\geq1}$ of
\Cref{def:displacement}, set
\[
    c_n\coloneqq \frac1{\varepsilon_n}-\log n.
\]
We call $c_n$ the $n$th \defi{regularized displacement}. The
subtraction removes the leading logarithmic growth of $1/\varepsilon_n$.
\end{definition}
Equation \eqref{eq:displacement-asymptotic} implies
\begin{equation}\label{eq:regularized-displacement-asymptotic}
    c_n
    =
    \gamma
    +
    \frac{6\zeta(3)}{(\log n)^2}
    -
    \frac{12\gamma\zeta(3)+7\pi^4/90}{(\log n)^3}
    +
    O((\log n)^{-4}).
\end{equation}
In particular, $c_n\to\gamma$. Eventual monotonicity requires more
than \eqref{eq:regularized-displacement-asymptotic}, since its remainder
does not by itself control successive differences.

Recall from \Cref{lem:theta-decreasing} that $\theta(t)\in(0,\pi)$
denotes the continuous argument of $\Phi(t)$ for $t>1$. At a positive
zero, we have
\[
    \theta(\rho_n)=\pi\varepsilon_n.
\]
Define the \defi{continuous regularized displacement}
\[
    c(t)
    \coloneqq 
    \frac{\pi}{\theta(t)}
    -
    \log\left(t-\frac{\theta(t)}{\pi}\right).
\]
Then $c(\rho_n)=c_n$.

\begin{proposition}[Eventual monotonicity]\label{prop:eventual}
The sequence $(c_n)_{n\geq1}$ is strictly decreasing for all sufficiently
large $n$.
\end{proposition}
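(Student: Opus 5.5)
Since $c(\rho_n)=c_n$ and $\rho_n<\rho_{n+1}$, it suffices to prove that the continuous regularized displacement $c(t)$ is strictly decreasing for all sufficiently large real $t$. Compute
\[
    c'(t)
    =
    -\frac{\pi\theta'(t)}{\theta(t)^2}
    -\frac{1-\theta'(t)/\pi}{t-\theta(t)/\pi}.
\]
The plan is to show $c'(t)<0$ for large $t$ by obtaining asymptotics for $\theta$ and $\theta'$ in inverse powers of $L\coloneqq\log t$ that hold with errors of relative size $O(t^{-1})$ beyond the logarithmic scale. The natural tool is the differentiated Watson expansion \eqref{eq:watson-differentiated-expansion} in \Cref{lem:watson-differentiated}, which was stated precisely for this purpose.

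\textbf{Step 1 (differentiable phase reduction).} Upgrade \eqref{eq:phase-integral-estimate} to a statement about derivatives. Write $\Phi(t)=t^{-1}\bigl(I(L)+E(t)\bigr)$, where
\[
    E(t)=\int_0^1\Gamma(1+x)e^{i\pi x}t^{-x}\bigl(r(t,x)-1\bigr)\dd x
    \quad\text{and}\quad
    r(t,x)\coloneqq t^{x+1}\frac{\Gamma(t-x)}{\Gamma(t+1)}.
\]
From $\partial_t\log r=\psi(t-x)-\psi(t+1)+(x+1)/t$ and $\psi(w)=\log w-1/(2w)+O(w^{-2})$, one gets $\partial_t r=O(t^{-2})$ uniformly in $x\in[0,1]$, in addition to $r-1=O(t^{-1})$. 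Hence $E(t)=O(t^{-1}L^{-1})$ and $\frac{\dd}{\dd t}E(t)=O(t^{-2})$. Since $\theta(t)=\arg\bigl(I(L)+E(t)\bigr)$, this yields
\[
    \theta(t)=\im\log I(L)+O(t^{-1})
    \quad\text{and}\quad
    \theta'(t)=\frac1t\,\im\frac{I'(L)}{I(L)}+O(t^{-2}L),
\]
where the last error uses $\abs{I(L)}\asymp L^{-1}$.

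\textbf{Step 2 (expansions in $L$).} Set $\vartheta(L)\coloneqq\im\log S(L)$ with $S=LI$; note $\im\log L=0$, so $\vartheta=\im\log I$. By \eqref{eq:phase-asymptotic}, $\vartheta(L)=\pi\bigl(L^{-1}-\gamma L^{-2}+\cdots\bigr)$. Applying \eqref{eq:watson-differentiated-expansion} gives $\vartheta'(L)=\im\bigl(I'/I\bigr)=-\pi L^{-2}+2\pi\gamma L^{-3}+O(L^{-4})$, consistent with termwise differentiation of the expansion of $\vartheta$. Therefore
\[
    \frac{\pi}{\theta(t)}=L+\gamma+a_1L^{-1}+O(L^{-2}),
\]
where $a_1=6\zeta(3)$ (the $L^{-1}$ term should vanish after simplification, which I would check), and
\[
    -\frac{\pi\theta'(t)}{\theta(t)^2}
    =
    \frac1t\Bigl(1-\frac{2\cdot 6\zeta(3)}{L^3}+O(L^{-4})\Bigr).
\]
This is precisely the $t$-derivative of $L+\gamma+6\zeta(3)L^{-2}$, up to $O(t^{-2}L^{3})$.

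\textbf{Step 3 (sign).} The second term of $c'(t)$ equals $-t^{-1}\bigl(1+O(t^{-1})\bigr)$. Adding the two terms,
\[
    c'(t)=-\frac{12\zeta(3)}{tL^3}+O\Bigl(\frac1{tL^4}\Bigr)<0
\]
for $t$ large, and then $c_{n+1}=c(\rho_{n+1})<c(\rho_n)=c_n$.

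The main obstacle is the bookkeeping. The leading $1/t$ contributions cancel exactly, and so do the $L^{-2}/t$ contributions, so the sign is decided only at order $L^{-3}/t$. This requires the derivative expansion to hold to relative accuracy $L^{-4}$, and requires the non-logarithmic errors (the $O(t^{-2})$ terms from Step 1) to be genuinely smaller than $t^{-1}L^{-4}$; that is why the derivative bound on $r(t,x)$ must be established uniformly in $x$, not just the bound on $r$ itself. I would verify the cancellation symbolically, by differentiating the expansion of $\pi/\vartheta(L)$ term by term (which Step 2 justifies), rather than by hand.
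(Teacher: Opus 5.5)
Your proposal is correct and follows essentially the same route as the paper. Both arguments go through a differentiated phase reduction $t\theta'(t)=\im\bigl(I'(L)/I(L)\bigr)+O(t^{-1})$ (you write it via $\Phi=t^{-1}(I+E)$ with bounds on $E$ and $\dd E/\dd t$), then termwise differentiation of the phase expansion justified by \Cref{lem:watson-differentiated}, and finally the cancellation leaving $c'(t)=-12\zeta(3)/(tL^3)+O(1/(tL^4))$. One small slip: in $\pi/\theta(t)$ the $L^{-1}$ coefficient is $0$ and $6\zeta(3)$ is the coefficient of $L^{-2}$, so that remainder should be $O(L^{-3})$; this corrected form is what your derivative formula actually uses.
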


\begin{proof}
Put
\[
    p(t)\coloneqq \frac{\theta(t)}{\pi}
    \quad\text{and}\quad
    L\coloneqq\log t.
\]
Write
\[
    R(t,x)\coloneqq\frac{\Gamma(t-x)}{\Gamma(t+1)}.
\]
The uniform gamma-ratio expansion
\eqref{eq:gamma-ratio-positive-axis} and the uniform digamma expansion
\cite[Eq.~(5.11.2)]{NIST} imply the estimates below. Indeed,
substituting $t-x$ and $t+1$ into the digamma expansion and expanding
the elementary terms uniformly for $0\leq x\leq1$ gives
\[
    \psi(t-x)-\psi(t+1)
    =-\frac{x+1}{t}+O(t^{-2})
    \quad\text{for}\quad 0 \leq x \leq 1.
\]
and hence
\[
\begin{aligned}
    t\frac{\partial R}{\partial t}(t,x)
    &=tR(t,x)\bigl(\psi(t-x)-\psi(t+1)\bigr)\\
    &=-(x+1)t^{-x-1}+O(t^{-x-2}),
\end{aligned}
\]
uniformly for $0\leq x\leq1$.
With $I(L)$ as in the proof of \Cref{thm:asymptotic}, integration yields
\begin{align}
    \Phi(t)
    &=\frac{I(L)}{t}+O\left(\frac1{t^2L}\right),
    \label{eq:phase-and-derivative-first}\\
    t\Phi'(t)
    &=\frac{I'(L)-I(L)}{t}
    +O\left(\frac1{t^2L}\right).
    \label{eq:phase-and-derivative-second}
\end{align}
Since $I(L)\sim L^{-1}$, dividing
\eqref{eq:phase-and-derivative-second} by
\eqref{eq:phase-and-derivative-first}, we find
\begin{equation}\label{eq:phase-derivative-logarithmic}
    t\theta'(t)
    =
    \im\frac{I'(L)}{I(L)}+O(t^{-1}).
\end{equation}
Thus \Cref{lem:watson-differentiated} justifies differentiating the
phase expansion with respect to $L$. Differentiating
\eqref{eq:phase-asymptotic}, we obtain
\begin{align*}
    p(t)
    ={}&
    \frac1L-\frac{\gamma}{L^2}+\frac{\gamma^2}{L^3}
    -\frac{\gamma^3+6\zeta(3)}{L^4}+O(L^{-5}),\\
    tp'(t)
    ={}&
    -\frac1{L^2}+\frac{2\gamma}{L^3}
    -\frac{3\gamma^2}{L^4}
    +\frac{4(\gamma^3+6\zeta(3))}{L^5}
    +O(L^{-6}).
\end{align*}
A direct division now produces
\begin{equation}\label{eq:phase-quotient-derivative}
    -\frac{tp'(t)}{p(t)^2}
    =
    1-\frac{12\zeta(3)}{L^3}+O(L^{-4}).
\end{equation}
On the other hand, $p(t)=O(L^{-1})$ and
$p'(t)=O(t^{-1}L^{-2})$, so
\begin{equation}\label{eq:logarithmic-correction-derivative}
    \frac{t(1-p'(t))}{t-p(t)}
    =1+O\left(\frac1{tL}\right).
\end{equation}
Since
\[
    c'(t)
    =
    -\frac{p'(t)}{p(t)^2}
    -\frac{1-p'(t)}{t-p(t)},
\]
subtracting \eqref{eq:logarithmic-correction-derivative} from
\eqref{eq:phase-quotient-derivative}, we obtain
\begin{equation}\label{eq:continuous-regularized-derivative-asymptotic}
    c'(t)
    =
    -\frac{12\zeta(3)}{t(\log t)^3}
    +
    O\left(\frac1{t(\log t)^4}\right).
\end{equation}
Therefore $c'(t)<0$ for all sufficiently large $t$. Since $\rho_n$ is
increasing, $c_n=c(\rho_n)$ is eventually strictly decreasing.
\end{proof}

The asymptotic formula and \Cref{prop:eventual} suggest that the
regularization captures a global monotone approach to $\gamma$, which
would strengthen the limiting relation into a uniform estimate for every
positive zero.

\begin{conjecture}\label{conj:displacement}
The sequence $(c_n)_{n\geq1}$, given by
\[
    c_n=\frac1{\rho_n-n}-\log n,
\]
is strictly decreasing.
\end{conjecture}

Since $c_n\to\gamma$, \Cref{conj:displacement} would imply
\[
    \rho_n-n<\frac1{\log n+\gamma}
    \quad\text{for}\quad n \geq 1.
\]

The conjecture reduces to the positivity of one real-analytic
function. The beta integral gives
\begin{equation}\label{eq:phase-beta-integral}
    \Phi(t)
    =
    \int_0^1
    \frac{u^{t-2}}
         {\log\frac{u}{1-u}-i\pi}
    \dd u.
\end{equation}
At an integer $n\geq2$, \eqref{eq:phase} specializes to
$B(n)=\pi\abs{G_n}$. Taking imaginary parts in
\eqref{eq:phase-beta-integral}
therefore implies
\begin{equation}\label{eq:schroeder-unit}
    \abs{G_n}
    =
    \int_0^1
    \frac{u^{n-2}}
         {\left(\log\frac{u}{1-u}\right)^2+\pi^2}
    \dd u.
\end{equation}
The substitution $v=(1-u)/u$ recovers Schr\"oder's representation
\eqref{eq:schroeder}. Applying the inverse substitution to
\eqref{eq:schroeder} and reindexing produces the Hausdorff-moment formula
\begin{equation}\label{eq:gregory-hausdorff}
    \abs{G_{n+1}}
    =
    \int_0^1 u^n\,
    \frac{\dd u}
         {u\left(\left(\log\frac{u}{1-u}\right)^2+\pi^2\right)}
    \quad\text{for}\quad n \geq 0.
\end{equation}
Consequently, for every $n,k\geq0$, we have
\[
    (-1)^k\Delta^k\abs{G_{n+1}}
    =
    \int_0^1 u^n(1-u)^k\,
    \frac{\dd u}
         {u\left(\left(\log\frac{u}{1-u}\right)^2+\pi^2\right)}
    \geq0.
\]
This is the finite-difference complete monotonicity proved by Qi and
Zhang \cite[Thm.~2, p.~988]{QiZhang}; the moment formula \eqref{eq:gregory-hausdorff}
also explains why it is natural
in the present phase-integral setting.

Set
\[
    s\coloneqq t-1,
    \qquad
    L\coloneqq\log s,
    \qquad
    W(L)\coloneqq s\Phi(s+1),
    \qquad
    \Theta(L)\coloneqq \arg W(L).
\]
For real $L$, the imaginary part of $W(L)$ is positive, so
\[
    0<\Theta(L)<\pi.
\]
Define
\[
\begin{aligned}
    \Dcomp(L)
    \coloneqq {}&
    \left(\frac{\Theta(L)}{\sin\Theta(L)}\right)^2\\
    &+
    \left(1+e^{-L}\right)
    \pi\csc^2\Theta(L)
    \im\frac{W'(L)}{W(L)}.
\end{aligned}
\]

Our final proposition makes the remaining obstacle explicit; one
positivity inequality would promote the eventual monotonicity of
\Cref{prop:eventual} to the global monotonicity asserted in
\Cref{conj:displacement}.

\begin{proposition}[Positivity criterion for global monotonicity]\label{prop:computational}
If
\begin{equation}\label{eq:computational-positivity}
    \Dcomp(L)>0
    \quad\text{for}\quad L \geq -\log 2,
\end{equation}
then \Cref{conj:displacement} holds.
\end{proposition}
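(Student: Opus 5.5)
The idea is to reparametrize the continuous regularized displacement $c(t)$ of \Cref{prop:eventual} by $L=\log(t-1)$, show that $\Dcomp(L)>0$ forces $\dd c/\dd L<0$ on the range containing every $\rho_n$, and then read off monotonicity of $c_n=c(\rho_n)$.

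\textbf{Step 1: change of variables.} Put $s=t-1$ and $L=\log s$. Then $W(L)=s\Phi(t)$ and $\arg s=0$, so $\Theta(L)=\theta(t)$. Differentiating $\log W = L+\log\Phi(s+1)$ in $L$ gives
\[
    \frac{W'(L)}{W(L)} = 1+s\frac{\Phi'(t)}{\Phi(t)},
    \qquad
    \Theta'(L)=\im\frac{W'(L)}{W(L)}=s\,\theta'(t).
\]
By \Cref{lem:theta-decreasing}, $\Theta'(L)<0$ for every real $L$ (that is, for $t>1$).

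\textbf{Step 2: the derivative of $c$ in the new variable.} Since $c(t)=\pi/\theta(t)-\log\bigl(t-\theta(t)/\pi\bigr)$ and $\dd t/\dd L=s$,
\[
    \frac{\dd c}{\dd L}
    =
    -\frac{\pi\Theta'(L)}{\Theta(L)^2}
    -\frac{s-\Theta'(L)/\pi}{s+1-\Theta(L)/\pi}.
\]
We need this to be negative. Two facts bound the second fraction from below:
\begin{itemize}[label=--]
    \item $-\Theta'(L)/\pi>0$, so the numerator exceeds $s$;
    \item $0<\Theta<\pi$, so the denominator lies in $(s,s+1)$ and is positive.
\end{itemize}
Hence the second fraction exceeds $s/(s+1)=1/(1+e^{-L})$. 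It therefore suffices that
\[
    -\frac{\pi\Theta'(L)}{\Theta(L)^2}<\frac{1}{1+e^{-L}},
    \quad\text{i.e.}\quad
    \Theta(L)^2+(1+e^{-L})\,\pi\,\Theta'(L)>0.
\]
Dividing by $\sin^2\Theta(L)>0$ shows this is exactly $\Dcomp(L)>0$. So hypothesis \eqref{eq:computational-positivity} yields $\dd c/\dd L<0$, and hence $c'(t)<0$, for all $t\geq 3/2$, since $L\geq-\log2$ corresponds to $s\geq1/2$.

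\textbf{Step 3: conclusion.} By \Cref{cor:half}, $\rho_1>3/2$, so every $\rho_n$ lies in the region $t>3/2$ where $c$ is strictly decreasing. The $\rho_n$ are strictly increasing, so $c_n=c(\rho_n)$ is strictly decreasing, which is \Cref{conj:displacement}.

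The only delicate point is the bookkeeping in Step 2: the inequality must be arranged so that the crude bounds $0<\Theta<\pi$ and $\Theta'<0$ turn the exact expression for $\dd c/\dd L$ into precisely the quantity defining $\Dcomp$. Apart from that, the argument is a direct computation.
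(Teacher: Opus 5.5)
Your proof is correct and is essentially the paper's argument. The paper works in the variable $t$: it shows that $-tp'(t)<p(t)^2$ (with $p=\theta/\pi$) forces $c'(t)<0$, using the same two facts $p'<0$ and $0<p<1$, and then checks that $\Dcomp(\log(t-1))>0$ is equivalent to that inequality; you carry out the identical estimate directly in $L=\log(t-1)$, where your $\dd c/\dd L$ is just $(t-1)c'(t)$, and the final step via $\rho_1>3/2$ and the increase of $\rho_n$ is the same in both.
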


\begin{proof}
Put
\[
    p(t)\coloneqq \frac{\theta(t)}{\pi}.
\]
Then
\[
    c'(t)
    =
    -\frac{p'(t)}{p(t)^2}
    -
    \frac{1-p'(t)}{t-p(t)}.
\]
By \eqref{eq:theta-decreasing}, $p'(t)<0$. The stronger inequality
\begin{equation}\label{eq:p-derivative-criterion}
    -tp'(t)<p(t)^2
\end{equation}
implies $c'(t)<0$. To see this, put \(q\coloneqq-p'(t)>0\). Then
\[
    \frac{q}{p(t)^2}<\frac1t,
\]
while \(0<p(t)<1\) gives
\[
    \frac{1+q}{t-p(t)}>\frac1t.
\]
The negative term in $c'(t)$ therefore dominates the positive one.

Now $\Theta(L)=\theta(t)$ and
\[
    \Theta'(L)=(t-1)\theta'(t).
\]
Since $1+e^{-L}=t/(t-1)$, \eqref{eq:computational-positivity} is
equivalent to
\[
    -t\theta'(t)<\frac{\theta(t)^2}{\pi},
\]
which is \eqref{eq:p-derivative-criterion}. Thus $c'(t)<0$ whenever
$\Dcomp(\log(t-1))>0$.

Equation \eqref{eq:first-zero-half-bound} implies
\[
    \rho_1>\frac32,
\]
so
\[
    \log(\rho_n-1)\geq\log(\rho_1-1)>-\log2
\]
for every $n$. Hence
\[
    c_n=c(\rho_n)>c(\rho_{n+1})=c_{n+1}.
\]
\end{proof}

For computation, put
\[
    r(x)\coloneqq \log\frac{e^x-1}{x}.
\]
The substitutions $u\coloneqq e^{-v/s}$ and $v\coloneqq e^y$ transform this into the exact fixed-domain
formula
\[
    W(L)
    =
    \int_{-\infty}^{\infty}
    \frac{e^{y-e^y}}
         {L-y-r(e^{y-L})-i\pi}
    \dd y
\]
and
\[
    W'(L)
    =
    -
    \int_{-\infty}^{\infty}
    \frac{
        e^{y-e^y}
        \left(1+e^{y-L}r'(e^{y-L})\right)
    }{
        \left(L-y-r(e^{y-L})-i\pi\right)^2
    }
    \dd y.
\]
A certified proof of \eqref{eq:computational-positivity} should include
the verifier, its complete
output, and the dependency versions. Floating-point evidence alone does
not prove the conjecture. The reduction is included to isolate the
remaining problem, not to supply evidence for it.

\section{Further directions}

The methods above suggest three analytic problems. First, consider the
\defi{shifted family}
\[
    \Gf(a,z)
    \coloneqq
    \int_0^1\binom{a+x}{z}\dd x,
\]
which satisfies
\[
    \Gf(a+1,z)-\Gf(a,z)=\Gf(a,z-1).
\]
The structural Markov problem for this family is distinct from the
questions pursued here. From the present viewpoint, the remaining
problem is to determine how the order, indicator, zero displacement,
and relative determinant vary with $a$.

Second, by the standard correspondence between compactly supported
measures of infinite support and Jacobi matrices
\cite[Thm.~2.13, pp.~43--44]{Teschl}, the measure
\[
    \sum_{n=1}^{\infty}\abs{G_n}\delta_{1/n}
\]
determines a Jacobi matrix. An explicit description of its recurrence
coefficients would give a tridiagonal spectral model for the positive
Gregory zeros and might make their displacement inequalities more
transparent.

Third, \eqref{eq:relative-zeta-origin-asymptotic} shows that the
relative zero zeta function has a logarithmic singularity at the origin
rather than a meromorphic one.
Its continuation and full singular expansion should therefore be
studied on a logarithmic covering of a punctured neighborhood of the
origin.

\enlargethispage{2\baselineskip}

\end{document}